\newcommand{\R}{\mathbb R}
\newcommand{\E}{\mathbb E}
 \renewcommand{\headrulewidth}{0pt}
 \renewcommand{\footrulewidth}{0.5pt}
 \definecolor{myaqua}{rgb}{0.0,0.5,0.55}
 \definecolor{lightaqua}{rgb}{0.75,0.95,0.95}
\newtheorem{Theorem}{Theorem}
\newtheorem{Prop}{Proposition}
\newtheorem{Lemma}{Lemma}
\def\lin#1#2{\textcolor[rgb]{0.6,0.6,0.6}{\vspace*{#1mm} \hrule
   height 3 pt \vspace*{#2mm}}}
\def\bt{\begin{tabular}}
\def\et{\end{tabular}}
\def\and{\mbox{ and }}
\def\E{\mbox{\bf E}}
\def\1{{\bf 1}}
 \def\boxx#1#2#3#4#5{
 {\linethickness{#4pt}\put(#1,#5){\color{myaqua}{\line(1,0){#3}}}}
 \multiput(#1,#2)(0,#4){2}{\line(1,0){#3}}
 \multiput(#1,#2)(#3,0){2}{\line(0,1){#4}}
  }
\begin{document}
	$\mbox{ }$

 \vskip 12mm

{ 

{\noindent{\Large\bf\color{myaqua}
  Variable selection in discriminant analysis for mixed variables and several groups  }}
%
\\[6mm]
{\bf Alban Mbina Mbina$^{1,a}$, Guy Martial Nkiet$^{1,b}$ Fulgence EYI-OBIANG$^{1,c}$}}
\\[2mm]
{ 
 $^1$Laboratoire URMI, Universit\'e des Sciences et Techniques de Masuku, BP 943 Franceville, Gabon.\\
 Email: $^a${\color{blue}{\underline{\smash{albanmbinambina@yahoo.fr}}}}, $^b${\color{blue}{\underline{\smash{gnkiet@hotmail.com}}}}, $^c${\color{blue}{\underline{\smash{feyiobiang@yahoo.fr}}}}
\lin{5}{7}




{  
 {\noindent{\large\bf\color{myaqua} Abstract}{\bf \\[3mm]
 \textup{
 We propose a method for  variable selection in  discriminant analysis  with mixed categorical and continuous variables. This method is based on a criterion that  permits to reduce the variable selection problem to a problem of estimating  suitable permutation and dimensionality.  Then, estimators for these parameters are proposed and the resulting method for selecting variables is shown to be consistent. A simulation study that permits to  study several poperties of the proposed  approach  and to compare it with an existing method is given. 
 }}}
 \\[4mm]
 {\noindent{\large\bf\color{myaqua} Keywords:}{\bf \\[3mm]
 Variable selection; Discriminant analysis; Classification; Mixed variables
}}\\[4mm]{\noindent{\large\bf\color{myaqua} MSC:}{\color{blue} 62H30; 62H12}}
\lin{3}{1}

\renewcommand{\headrulewidth}{0.5pt}
\renewcommand{\footrulewidth}{0pt}

 \pagestyle{fancy}
 \fancyfoot{}
 \fancyhead{} 
 \fancyhf{}
 \fancyhead[RO]{\leavevmode \put(-90,0){\color{myaqua}} \boxx{15}{-10}{10}{50}{15} }
 \fancyfoot[C]{\leavevmode
 \put(-2.5,-3){\color{myaqua}\thepage}}

 \renewcommand{\headrule}{\hbox to\headwidth{\color{myaqua}\leaders\hrule height \headrulewidth\hfill}}

}}

\section{Introduction}
\noindent
The problem of classifying an observation into one of several classes in the basis of data consisting of both continuous and categorical variables is an old problem that  have been tackled under different forms in the literature.  The earliest works in this field go back to Chang and Afifi (1974) and Krzanowski (1975) who used the location model introduced by Olkin and Tate (1961) to form a classification rule in the context of discriminant analysis involving two groups. More recent work has focused on defining distance measures between populations or making inference  on them  (e.g., Krzanowski 1983, Krzanowski  1984, Bar-Hen and Daudin 1995, Bedrick et al. 2000, de Leon and Carri\`ere 2005). One of the most important problem in the context described above is the problem of selecting  the  appropriate categorical and/or continuous variables to use for discrimination. Indeed, it is well recognized that using fewer variables improve classification performance and permits to avoid estimation problems (e.g. McLachlan 1992, Mahat et al. 2007). There are several works dealing with this problem, mainly in the context of location model. Some of these works are based on the use of distances between populations for determining the most predictive variables (Krzanowski 1983, Daudin 1986, Bar-Hen and Daudin 1995, Daudin and Bar-Hen 1999). Krusinska (1989a, 1989b, 1990) used methods based on the percentage of missclassification, Hotelling's $T^2$ and graphical models. More recently, Mahat et al. (2007) proposed a method based on distance between groups as measured by smoothed Kullback-Leiler divergence. All these works consider the case of two groups and, to the best of our knowledge, the case of more than two groups have not yet  been considered for variable selection purpose. So, it is of great interest to introduce a method that can be used when the number of groups is greater than two. Such an approach have been proposed recently in Nkiet (2012) for the case of continuous variables only. It is based on a criterion that permits to characterize the set of variables that are appropriate for discrimination by means of two parameters, so that the variable selection problem reduces to that of estimating these parameters. 

In this paper, we extend  the approach of Nkiet (2012) to the case of mixed variables. The resulting method has two advantages; first, it can be used when the number of groups is greater than two, and secondly it just require that the random vector consisting of the continuous variables has finite fourth order moment. No assumption on the distribution of this random vector is needed and, therefore, we do not suppose that the location model holds. In section 2, we introduce a criterion by means of which the set of  variables to be estimated is characterized by means of suitable permutation and dimensionality. Then, estimating this criterion is tackled in section 3. More precisely, empirical estimators as well as non-parametric smoothing procedure are used for defining an estimator of the criterion. In the first case, we obtain properties of the resulting estimator that permits to obtain its asymptotic distribution. Section 4  is devoted to the definition of our proposal for variable selection. Consistency of the method, when empirical estimators are used,  is then proved. Section 5  is devoted to the presentation of numerical experiments made in order to study several properties of the proposal and   to compare it with an existing method.    The first issue that is adressed  concerns the impact of chosing penalty functions that are involved in our procedure, and that of the type of estimators that is used. The results reveal low impact on the performance of the proposed method. Since this method depends on two real  parameters, it is of interest to study their influence on its performance and, consequently, to define a strategy that permits to chose optimal values for them. The simulation results clearly show their impact on the performance, and we propose a method based on leave-one-out cross validation for obtaining optimal results.  When using this appoach, the obtained  results show  that the proposal is competitive with that of Mahat et al. (2007). All the proofs are  given in Section 6.

\section{Statement of the problem}
\label{sec2}
Letting $\left(\Omega,\mathcal{A},P\right)$ be a probability space, we consider random vectors $$X=\left(X^{(1)},\cdots,X^{(p)}\right)^T \,\,\, \textrm{and}\,\,\, Y=\left(Y^{(1)},\cdots,Y^{(d)}\right)^T$$  defined on this probability space and valued into $\mathbb{R}^p$ and $\{0,1\}^d$ respectively. The r.v. $X$ consists of continuous random variables whereas  $Y$ consists of binary random variables. As usual, $Y$ may be associated to  a multinomial random variable by considering  $U=1+\sum_{j=1}^dY^{(j)}\,2^{j-1}$ which is valued into $\{1,\cdots,M\}$, where $M=2^d$. Suppose that the observations of $(X,Y)$ come from $q$ groups $\pi_1,\cdots\pi_q$  (with $q\geq 2$) characterized by a random variable $Z$ valued into $\{1,\cdots,q\}$; this means that $(X,Y)$ belongs to  $\pi_\ell$  if, and only if, one has $Z=\ell$.   Such framework has  been considered in the literature for classification purpose. Indeed, for the case of two groups, that is  when $q=2$, Krzanowski (1975) proposed a classification rule  based on the location model  under the assumption that the distribution of $X$, conditionally to $U=m$ and $Z=\ell$, is the multivariate normal distribution $N(\mu_{m\ell},\Sigma)$. This rule allocates a future observation $(x,y)$ of $(X,Y)$ to $\pi_1$ if 
\begin{equation}\label{fisher}
\left(\mu_{m1}-\mu_{m2}\right)^T\Sigma^{-1}\left(x-\frac{1}{2}(\mu_{m1}+\mu_{m2})\right)\geq\log\left(\frac{p_{m2}}{p_{m1}}\right)+\log(\alpha),
\end{equation}
where $m=1+\sum_{j=1}^dy^{(j)}\,2^{j-1}$,  $p_{m\ell}=P(U=m|Z=\ell)$ and $\alpha$ is a constant that depends on costs due to missclassification and prior probabilities for the two groups. The case where $q>2$ was considered  by  de Leon et al (2011) in the context of  general mixed-data models. In this case, the optimum rule classifies an observation  $(x,y)$ as belonging to $\pi_{\ell^\ast}$ if 
\begin{equation}\label{classmult}
\delta_m^{(\ell^\ast)}(x,y)=\max_{\ell =1,\cdots,q}\delta_m^{(\ell)}(x,y) 
\end{equation}
where
\[
\delta_m^{(\ell)}(x,y)=(\mu_{m\ell})^T\Sigma^{-1}x-\frac{1}{2}(\mu_{m\ell})^T\Sigma^{-1}\mu_{m\ell}+\log(p_{m\ell})+\log(\beta_\ell),
\]
with $\beta_\ell=P(Z=\ell)$. As it can be seen,  these rules involve observations of all  the variables $X^{(j)}$ in $X$. Nevertheless, as it is well recognized (see, e.g., McLachlan 1992, Mahat et al. 2007), using fewer variables improve  classification performance. So, it is of real interest to perform selection of the $X^{(j)}$'s from a sample of $(X,Y,Z)$. For doing that,  we extend to the case of mixed variables  an approach used  in Nkiet (2012) for the case of continuous variables only. This approach first  consists in  introducing a criterion by means of which the set of variables that are adequate for discrimination is characterized. From now on, we assume that $\mathbb{E}\left(\Vert X\Vert^4\right)<+\infty$, where $\Vert\cdot\Vert$denotes the usual Euclidean norm of $\mathbb{R}^p$, and  for  any $m\in\{1,\cdots,M\}$  we consider
\[
p_m=P(U=m),\,\,\mu_{m} = \E\left(X|U=m\right)\,\,\textrm{ and }\,\, \mu_{\ell,m} = \E\left(X|Z=\ell,U=m\right).
\]
We denote by $\otimes $ the tensor product defined as follows: for any $(u,v)\in (\mathbb{R}^p)^2$, $u\otimes v$ is the linear map $h\in\mathbb{R}^p\mapsto \langle u,h \rangle v$, where $\langle\cdot , \cdot>$ is the usual Euclidean inner product of $\mathbb{R}^p$. Then, we consider the conditional covariance operator given by
\[
V_{m} = \E\left(\left(X-\mu_{m}\right)\otimes\left(X-\mu_{m}\right)|U=m\right)
\]
and assume that it is invertible. Let us put $I:=\{1,\cdots,p\}$ and, for any subset $K$ of $ I$:
\begin{equation}\label{qk}
Q_{K|m} := A^{*}_{K}\left( A_{K}V_m A^{*}_{K} \right)^{-1}A_{K}
\end{equation}
where  $A_{K}$ denotes the projector:
\[
x = (x_{i})_{i \in I}\,\in \mathbb{R}^{p} \mapsto x_{K} = (x_{i})_{i \in K}\in \mathbb{R}^{\textrm{card}(K)}.
\] 
Then, putting 
\[
p_{\ell|m}=P(Z=\ell |U=m),
\]  
we introduce
\begin{equation}\label{critm}
\xi_{K|m} =  \sum_{\ell=1}^{q} p_{\ell|m}^{2}\| \left(I_{\R^{p}} - V_{m} Q_{K|m}\right)\left(\mu_{\ell,m} - \mu_{m}\right) \|^2,
\end{equation} 
where  $I_{\R^{p}}$ is the identity operator of $\mathbb{R}^p$. This leads us to consider the criterion
\begin{equation}\label{crit}
\xi_{K} =  \sum_{m=1}^{M} p_{m}^{2}\xi_{K|m} 
\end{equation}
by means of which we will try to characterize the subset of variables that is adequate for discrimination. For defining this subset of $I$, let us keep in mind that if  a variable $X^{(j)}$ is not adequate for discrimination, then it is the case whatever  $Y$. So, 
denoting by $I_{0,m}$ the subset of $I$ consisting of continuous  variables in $X$  that are not adequate for discrimination  when $U=m$, we can consider the subset 
\[
I_{0}  = \bigcap_{m=1}^{M}I_{0,m}
\]
as the subset of variables that are not adequate for discrimination in the mixed case. Therefore, our problem reduces to the problem of estimating the subset $I_1$ given by
\[
I_{1}  =I-I_0=  \bigcup_{m=1}^{M}I_{1,m}
\]
where  $I_{1,m}=I-I_{0,m}$. As it was done in Nkiet (2012) for the case of continuous variables only, an explicit expression of  $I_{1,m}$ can be obtained by using results from McKay (1977). Indeed, let  $\lambda_{1,m} \geq \lambda_{2,m} \geq \cdots \geq \lambda_{p,m}$ denote the eigenvalues of $T_{m} = V_{m}^{-1}B_{m}$ where $B_m$ is the between groups  covariance operator conditionally to $U=m$  given by
\[
B_{m} = \sum_{\ell=1}^{q} p_{\ell|m}(\mu_{\ell,m} - \mu_{m})\otimes(\mu_{\ell,m} - \mu_{m}), 
\]
and let  $\upsilon_{i}^{m}=(\upsilon_{i1}^{m},\cdots,\upsilon_{ip}^{m})^{T}$ $(i=1,\cdots,p)$ be an eigenvector of $T_{m}$ associated with $\lambda_{i,m}$. Then, 
$I_{1,m} = \left\{k \in I | \exists i\in\{1,\cdots, r_m\}, \upsilon_{ik}^{m} \neq 0  \right\}$. Now, we are able to give a characterization of $I_1$ by means of the criterion given in (\ref{crit}).

\begin{Prop}\label{prop1}
For $ K \subset I$, we have $\xi_{K} = 0$ if  and only if $I_{1}\subset  K$.
\end{Prop}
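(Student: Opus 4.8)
The plan is to reduce the global criterion to its conditional pieces and then to a purely linear-algebraic statement about a projector. Since $\xi_{K} = \sum_{m=1}^{M} p_{m}^{2}\,\xi_{K|m}$ is a sum of nonnegative terms with positive weights $p_m^2$, and each $\xi_{K|m} = \sum_{\ell} p_{\ell|m}^{2}\,\|(I_{\R^{p}} - V_m Q_{K|m})(\mu_{\ell,m}-\mu_m)\|^2$ is itself a nonnegative combination, I would first observe that $\xi_K = 0$ holds if and only if $(I_{\R^{p}} - V_m Q_{K|m})(\mu_{\ell,m}-\mu_m) = 0$ for every $m$ and every $\ell$ with $p_m, p_{\ell|m}>0$. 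Thus the whole statement follows once I characterize, for each fixed $m$, the vectors annihilated by $I_{\R^{p}} - V_m Q_{K|m}$.

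The key step is to show that $P_{K|m} := V_m Q_{K|m} = V_m A_K^{*}(A_K V_m A_K^{*})^{-1}A_K$ is a (nonorthogonal) projector whose range is exactly $V_m E_K$, where $E_K := \mathrm{span}\{e_k : k \in K\} = \mathrm{Im}(A_K^{*})$. A direct computation gives $P_{K|m}^2 = P_{K|m}$, using that $A_K V_m A_K^{*}$ is invertible, being the compression of the positive-definite $V_m$ to $E_K$ (indeed $\langle A_K V_m A_K^{*}z,z\rangle = \langle V_m A_K^{*}z, A_K^{*}z\rangle > 0$ for $z\neq 0$, since $A_K^{*}$ is injective). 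Moreover $P_{K|m}$ fixes every vector of the form $V_m A_K^{*}z$, so its range is precisely $V_m E_K$. Consequently $(I_{\R^{p}} - P_{K|m})v = 0$ if and only if $v \in \mathrm{range}(P_{K|m}) = V_m E_K$. Applying this with $v = \mu_{\ell,m}-\mu_m$, and writing $R_m := \mathrm{span}\{\mu_{\ell,m}-\mu_m\} = \mathrm{range}(B_m)$ (equality holds because $B_m$ is symmetric positive semidefinite with $\ker B_m = R_m^{\perp}$), I conclude that $\xi_{K|m} = 0$ if and only if $R_m \subset V_m E_K$, equivalently $V_m^{-1}R_m \subset E_K$.

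It then remains to identify $V_m^{-1}R_m$ with the eigen-subspace $W_m := \mathrm{span}\{\upsilon_1^{m},\dots,\upsilon_{r_m}^{m}\}$ attached to the nonzero eigenvalues of $T_m = V_m^{-1}B_m$, and to translate $W_m \subset E_K$ into $I_{1,m}\subset K$. Since $T_m$ is self-adjoint for the inner product $\langle u,v\rangle_{V_m} := \langle V_m u,v\rangle$, it is diagonalizable, and its range — the span of the eigenvectors with nonzero eigenvalue — equals $V_m^{-1}(\mathrm{range}\,B_m) = V_m^{-1}R_m$; hence $W_m = V_m^{-1}R_m$. Finally, by the definition $I_{1,m} = \{k : \exists\, i\le r_m,\ \upsilon_{ik}^{m}\neq 0\}$, the set $I_{1,m}$ is exactly the support of $W_m$, so $W_m \subset E_K$ if and only if $I_{1,m}\subset K$. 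Combining the three reductions yields $\xi_K = 0 \iff I_{1,m}\subset K$ for all $m \iff I_1 = \bigcup_{m}I_{1,m}\subset K$.

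The main obstacle is the middle step: verifying that $V_m Q_{K|m}$ is the projector onto $V_m E_K$ (idempotency together with the exact identification of its range), since this is what converts the analytic condition $\xi_{K|m}=0$ into the subspace inclusion $R_m\subset V_m E_K$. The surrounding steps — splitting the nonnegative sum and identifying $V_m^{-1}R_m$ with $W_m$ through the $V_m$-self-adjointness of $T_m$ — are comparatively routine.
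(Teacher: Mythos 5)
Your proposal is correct, and its outer skeleton coincides with the paper's: both reduce $\xi_K=0$, via nonnegativity of the weights $p_m^2$ and $p_{\ell|m}^2$, to the family of conditional statements $\xi_{K|m}=0 \Leftrightarrow I_{1,m}\subset K$ for each $m$, and then conclude $I_1=\bigcup_m I_{1,m}\subset K$. The difference lies in how that conditional equivalence is obtained. The paper treats it as a black box: it observes that, on the conditional probability space $\left(\Omega,\mathcal{A},P^{\{U=m\}}\right)$, the quantity $\xi_{K|m}$ is exactly the criterion of Nkiet (2012) for continuous variables, and simply invokes Theorem 2.1 of that reference. You instead prove the equivalence from scratch: you verify that $A_K V_m A_K^{*}$ is invertible, that $V_m Q_{K|m}$ is an idempotent with range $V_m E_K$ (so that $\Delta_{\ell,K|m}=0$ iff $\mu_{\ell,m}-\mu_m \in V_m E_K$), and then use the $V_m$-self-adjointness of $T_m=V_m^{-1}B_m$ to identify $V_m^{-1}\mathrm{range}(B_m)$ with the span of the eigenvectors attached to nonzero eigenvalues, whose coordinate support is $I_{1,m}$ by definition. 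All of these steps are sound (including the implicit point that the support of a subspace does not depend on the chosen spanning set of eigenvectors, so $W_m\subset E_K$ iff $I_{1,m}\subset K$). What the paper's route buys is brevity and an explicit display of the structural fact that the mixed-variable criterion is a weighted aggregation of the continuous-variable criterion across the cells $U=m$; what your route buys is self-containedness and transparency — it reconstructs the geometric content hiding behind the citation (an oblique projector onto $V_m E_K$ plus the eigenstructure of $T_m$), which is essentially the content of McKay (1977) as used by Nkiet (2012).
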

From this proposition, it is easily seen that, putting $K_i=I-\{i\}$, one has the equivalence:  $\xi_{K_i}>0\Leftrightarrow
i\in I_{1}$.   Now, let  $\sigma$ be the permutation  of $I$ such that:

\bigskip

\textit{(A1)} \ $\xi_{K\sigma\left(  1\right)  }\geq\xi_{K\sigma\left(
2\right)  }\geq\cdots\geq\xi_{K\sigma\left(  p\right)  };$

\bigskip

\textit{(A2)} \ $\xi_{K\sigma\left(  i\right)  }=\xi_{K\sigma\left(  j\right)
}$ and $i<j\;$imply $\sigma\left(  i\right)  <\sigma\left(  j\right)  .$

\bigskip

\noindent Since  $I_{1}$ is a non-empty set, there
exists an integer $s\in I$ which is equal to $p$ when $I_{1}=I$, and
satisfying
\[
\xi_{K\sigma\left(  1\right)  }\geq\cdots\geq\xi_{K\sigma\left(  s\right)
}>\xi_{K\sigma\left(  s+1\right)  }=\cdots=\xi_{K\sigma\left(  p\right)  }=0
\]
when $I_{1}\neq I$. Hence
\begin{equation*}\label{carac}
I_{1}=\left\{
\sigma\left(  i\right)  ;\;1\leq i\leq s\right\}.
\end{equation*}
Therefore, estimating $I_1$ reduces to estimating the two parameters $\sigma$ and $s$. For doing that, we first need to consider an estimator of the criterion given in (\ref{crit}).
\section{Estimating the  criterion}
\label{sec3.2}

Let  $\{(X_i,Y_i,Z_i)\}_{1\leq i\leq n}$ be an i.i.d. sample of $(X,Y,Z)$ with 
\[
X_i=(X^{(1)}_i,\cdots,X^{(p)}_i)^T\,\,\textrm{ and }\,\,  Y_i=(Y^{(1)}_i,\cdots,Y^{(d)}_i)^T;
\] 
we put $U_i=1+\sum_{j=1}^dY^{(j)}_i\,2^{j-1}$. In this section, we define estimators for the criterion given in (\ref{crit}) by estimating the parameters involved in its definiion. First, empirical estimators are introduced and properties of the resulted estimator of the criterion are given, and secondly we consider estimators obtained by using non-parametric smoothing procedures as in Mahat et al. (2007).
\subsection{Empirical estimators}
\label{EE}
Putting  
\[
\widehat{N}^{(n)}_{m} = \sum_{i=1}^{n} \textrm{\textbf{1}}_{\left\{U_{i}=m \right\}}\,\textrm{  and  }\, \widehat{N}^{(n)}_{\ell,m} = \sum_{i=1}^{n} \textrm{\textbf{1}}_{\left\{ Z_{i}=\ell,U_{i}=m \right\}},
\]
we estimate  $p_{m}$, $p_{\ell|m}$, $\mu_{m}$,  $\mu_{\ell,m}$  and  $V_{m}$ respectively by:
\[
\widehat{p}^{(n)}_{m} = \frac{\widehat{N}^{(n)}_{m}}{n},\,\,\,\,  \widehat{p}^{(n)}_{\ell|m} = \frac{\widehat{N}^{(n)}_{\ell,m}}{\widehat{N}^{(n)}_{m}},\,\,\,\, \widehat{\mu}^{(n)}_{m} = \frac{1}{\widehat{N}^{(n)}_{m}}\sum_{i=1}^{n}\textrm{\textbf{1}}_{\left\{U_{i}=m \right\}}X_{i},
\]
\[
\widehat{\mu}^{(n)}_{\ell,m} = \frac{1}{\widehat{N}^{(n)}_{\ell,m}}\sum_{i=1}^{n}\textrm{\textbf{1}}_{\left\{Z_i=\ell,U_{i}=m \right\}}X_{i}
\]
and
\[
\widehat{V}^{(n)}_{m} = \frac{1}{\widehat{N}^{(n)}_{m}}\sum_{i=1}^{n}\textrm{\textbf{1}}_{\left\{U_{i}=m \right\}}(X_{i}-\widehat{\mu}^{(n)}_{m})\otimes(X_{i}-\widehat{\mu}^{(n)}_{m}).
\]
Then, considering 
\[
\widehat{Q}^{(n)}_{K|m} = A^{*}_{K}\left( A_{K}\widehat{V}^{(n)}_{m} A^{*}_{K} \right)^{-1}A_{K}
\]
and
\[
\widehat{\xi}^{(n)}_{K|m} =  \sum_{\ell=1}^{q}(\widehat{p}^{(n)}_{\ell|m})^2\| \left(I_{\R^{p}} -\widehat{V}^{(n)}_{m} \widehat{Q}^{(n)}_{K|m}\right)\left(\widehat{\mu}^{(n)}_{\ell,m}  - \widehat{\mu}^{(n)}_{m}\right) \|^2,
\] 
we take as estimator of  $\xi_K$ the random variable $\widehat{\xi}^{(n)}_K$ defined by:
\begin{equation}\label{estimcrit}
\widehat{\xi}^{(n)}_{K}  =  \sum_{m=1}^{M}  \left(\widehat{p}^{(n)}_{m}\right)^{2}\widehat{\xi}^{(n)}_{K|m}. 
\end{equation}

\noindent Now, we will give a result which establishes strong consistency for $\widehat{\xi}_{K}^{(n)}$  and will be useful for determining its asymptotic distribution and consistency of the proposed method for selecting variables.
Let us  consider the random variables
\[ 
\mathcal{X} =
\begin{pmatrix} \textrm{\textbf{1}}_{\left\{Z=1,U=1\right\}}X & \dots & \textrm{\textbf{1}}_{\left\{Z=1,U=M\right\}}X\\ 
                \textrm{\textbf{1}}_{\left\{Z=2,U=1\right\}}X & \dots & \textrm{\textbf{1}}_{\left\{Z=2,U=M\right\}}X\\
                \vdots  & \ddots & \vdots\\ 
                \textrm{\textbf{1}}_{\left\{Z=q,U=1\right\}}X & \dots & \textrm{\textbf{1}}_{\left\{Z=q,U=M\right\}}X\\
\end{pmatrix},
\]
\[
\mathcal{X}_{i} =
\begin{pmatrix} \textrm{\textbf{1}}_{\left\{Z_i=1,U_i=1\right\}}X_i & \dots & \textrm{\textbf{1}}_{\left\{Z_i=1,U_i=M\right\}}X_i\\ 
                \textrm{\textbf{1}}_{\left\{Z_i=2,U_i=1\right\}}X_i & \dots & \textrm{\textbf{1}}_{\left\{Z_i=2,U_i=M\right\}}X_i\\
                \vdots  & \ddots & \vdots\\ 
                \textrm{\textbf{1}}_{\left\{Z_i=q,U_i=1\right\}}X_i & \dots & \textrm{\textbf{1}}_{\left\{Z_i=q,U_i=M\right\}}X_i\\
\end{pmatrix}
\]
valued into the space $\mathcal{M}_{qp,M}(\mathbb{R})$ of $pq\times M$ matrices. We also introduce the random vectors
\[ 
\mathcal{Y} =
\begin{pmatrix} \textrm{\textbf{1}}_{\left\{U=1\right\}}X\\ 
                \textrm{\textbf{1}}_{\left\{U=2\right\}}X\\
                \vdots \\ 
                \textrm{\textbf{1}}_{\left\{U=M\right\}}X\\
\end{pmatrix},\,\,
\mathcal{Y}_{i} =
\begin{pmatrix} \textrm{\textbf{1}}_{\left\{U_i=1\right\}}X_i\\ 
                \textrm{\textbf{1}}_{\left\{U_i=2\right\}}X_i\\
                \vdots\\ 
                \textrm{\textbf{1}}_{\left\{U_i=M\right\}}X_i\\
\end{pmatrix},\,  \mathcal{Z} =
\begin{pmatrix} \textrm{\textbf{1}}_{\left\{U=1\right\}}\\ 
                \textrm{\textbf{1}}_{\left\{U=2\right\}}\\
                \vdots \\ 
                \textrm{\textbf{1}}_{\left\{U=M\right\}}\\
\end{pmatrix},\, \mathcal{Z}_{i} =
\begin{pmatrix} \textrm{\textbf{1}}_{\left\{U_i=1\right\}}\\ 
                \textrm{\textbf{1}}_{\left\{U_i=2\right\}}\\
                \vdots\\ 
                \textrm{\textbf{1}}_{\left\{U_i=M\right\}}\\
\end{pmatrix}, 
\]
the random variables valued into  $\mathcal{M}_{q,M}(\mathbb{R})$,

\[
\mathcal{U} =
\begin{pmatrix} \textrm{\textbf{1}}_{\left\{Z=1,U=1\right\}} & \dots & \textrm{\textbf{1}}_{\left\{Z=1,U=M\right\}}\\ 
                \textrm{\textbf{1}}_{\left\{Z=2,U=1\right\}} & \dots & \textrm{\textbf{1}}_{\left\{Z=2,U=M\right\}}\\
                \vdots  & \ddots & \vdots\\ 
                \textrm{\textbf{1}}_{\left\{Z=q,U=1\right\}} & \dots & \textrm{\textbf{1}}_{\left\{Z=q,U=M\right\}}\\
\end{pmatrix},\,\,
\mathcal{U}_{i} =
\begin{pmatrix} \textrm{\textbf{1}}_{\left\{Z_i=1,U_i=1\right\}} & \dots & \textrm{\textbf{1}}_{\left\{Z_i=1,U_i=M\right\}}\\ 
                \textrm{\textbf{1}}_{\left\{Z_i=2,U_i=1\right\}} & \dots & \textrm{\textbf{1}}_{\left\{Z_i=2,U_i=M\right\}}\\
                \vdots  & \ddots & \vdots\\ 
                \textrm{\textbf{1}}_{\left\{Z_i=q,U_i=1\right\}} & \dots & \textrm{\textbf{1}}_{\left\{Z_i=q,U_i=M\right\}}\\
\end{pmatrix},
\]
and the random variables 
\[ 
\mathcal{V}  =
\begin{pmatrix} \textrm{\textbf{1}}_{\left\{U=1\right\}}X\otimes X\\ 
                \textrm{\textbf{1}}_{\left\{U=2\right\}}X\otimes X\\
                \vdots \\ 
                \textrm{\textbf{1}}_{\left\{U=M\right\}}X\otimes X\\
\end{pmatrix},\,\,
 \mathcal{V} _{i} =
\begin{pmatrix} \textrm{\textbf{1}}_{\left\{U_i=1\right\}}X_{i}\otimes X_i\\ 
                \textrm{\textbf{1}}_{\left\{U_i=2\right\}}X_{i}\otimes X_i\\
                \vdots\\ 
                \textrm{\textbf{1}}_{\left\{U_i=M\right\}}X_{i}\otimes X_i\\
\end{pmatrix}
\]
valued into $(\mathcal{L}(\mathbb{R}^p))^M$, where  $\mathcal{L}(\mathbb{R}^p)$ denotes the space of  operators from  $\mathbb{R}^p$ to itself. Further, we consider the random variables
\[
\mathcal{W}=(\mathcal{X},\mathcal{Y},\mathcal{Z},\mathcal{U},\mathcal{V}),\,\,
\mathcal{W}_i=(\mathcal{X}_i,\mathcal{Y}_i,\mathcal{Z}_i,\mathcal{U}_i,\mathcal{V}_i)
\]
valued into the Euclidean vector space  
\[
\mathcal{E} = \mathcal{M}_{qp,M}(\R)\times \R^{pM}\times  \R^{M}\times  \mathcal{M}_{q,M}(\R)\times \mathcal{L}(\R^{p})^{M};
\] 
then, we put
\begin{equation}\label{wn}
\widehat{W}^{(n)}= \sqrt{n}\left(\frac{1}{n}\sum_{i=1}^{n}\mathcal{W}_{i} - \mathbb{E}(\mathcal{W}) \right).
\end{equation}
Note that, for any $(a,b,c,d,e)$ $\in$ $\mathcal{E}$,  we  can write:

\[ a =
\begin{pmatrix} a_{1,1}  & a_{1,2}  & \dots & a_{1,M}\\
                a_{2,1} & a_{2,2} & \dots & a_{2,M}\\
                \vdots & \vdots & \ddots & \vdots\\ 
                a_{q,1} & a_{q,2} & \dots & a_{q,M}\\
\end{pmatrix}, \textrm{ where } a_{\ell,m} \in \mathbb{R}^{p},\,\, 1\leq \ell \leq q , \,\, 1\leq m \leq M,
\]
\[ b =
\begin{pmatrix} b_{1}\\
                b_{2}\\
                \vdots\\ 
                b_{M}\\
\end{pmatrix}, \textrm{ where }   b_{m} \in \R^{p}, 1\leq m \leq M,
\]
\[ 
c =
\begin{pmatrix} c_{1}\\
               c_{2}\\
                \vdots\\ 
               c_{M}\\
\end{pmatrix}, \textrm{ where } \,\, c_{m} \in \mathbb{R},\,\, 1\leq m \leq M,
\]
\[ d =
\begin{pmatrix} d_{1,1}  & d_{1,2}  & \dots & d_{1,M}\\
                d_{2,1} & d_{2,2} & \dots & d_{2,M}\\
                \vdots & \vdots & \ddots & \vdots\\ 
                d_{q,1} & d_{q,2} & \dots &d_{q,M}\\
\end{pmatrix}, \textrm{ where }  c_{\ell,m} \in \mathbb{R}, \,\,1\leq \ell \leq q,  \,\,1\leq m \leq M,
\]

\[ e =
\begin{pmatrix} e_{1}\\
                e_{2}\\
                \vdots\\ 
                e_{M}\\
\end{pmatrix}, \textrm{ where }  e_{m} \in \mathcal{L}( \mathbb{R}^{p}), 1\leq m \leq M.
\]
We introduce the  projectors
\begin{eqnarray*}
\pi_{1}^{\ell m} &\,\,:\,\,& (a,b,c,d,e) \in\mathcal{E}  \mapsto  a_{\ell , m} \in  \mathbb{R}^{p}, \\
\pi_{2}^{m} &:& (a,b,c,d,e) \in \mathcal{E}  \mapsto b_{m} \in  \mathbb{R}^{p}, \\
\pi_{3}^{m} &:& (a,b,c,d,e) \in \mathcal{E}\mapsto c_{m} \in  \mathbb{R}, \\
\pi_{4}^{\ell m} &:& (a,b,c,d,e) \in\mathcal{E} \mapsto d_{\ell ,m} \in  \mathbb{R}, \\
\pi_{5}^{m} &:& (a,b,c,d,e) \in \mathcal{E}\mapsto e_{m} \in \mathcal{L}( \mathbb{R}^{p}), 
\end{eqnarray*}
the vector
\[
\Delta_{\ell,K|m}=\left(I_{\mathbb{R}^p}-V_mQ_{K|m}\right)\left(\mu_{\ell ,m}-\mu_m\right)
\]
and the   operators $\Lambda_{K|m}$, $\Phi_{\ell,K|m}$ and  $\Psi_{\ell,K|m}$ defined on $\mathcal{E}$ by
\[
\Lambda_{K|m}(T)= 2p_{m}\pi_{3}^{m}(T)\,\xi_{K|m},
\]
\[
\Phi_{\ell,K|m}(T)= p_{m}^{-1}\left(\pi_{4}^{\ell m}\left(T\right) - \pi_{3}^{m}\left(T\right)p_{\ell|m}\right)\|\Delta_{\ell,K|m}\|,
\]
and
\begin{eqnarray*}
\Psi_{\ell,K|m}\left(T\right) &=&p_m^{-1} \left(I_{\R^{p}} - V_{m}Q_{K,m}\right)
 \left[p_{\ell|m}^{-1}\left(\pi_{1}^{\ell m}(T) - \pi_{4}^{\ell m}(T)\mu_{\ell,m}\right)\right.
 - \pi_{2}^{m}(T) + \pi_{3}^{m}(T)\mu_{m}\\
& & -\left(\pi_{5}^{m}(T) - \pi_{3}^{m}(T)\left(V_{m} + \mu_{m}\otimes \mu_{m}\right)\right)Q_{K|m}\left(\mu_{\ell,m} - \mu_{m}\right)\\
& & +\left(\left(\pi_{2}^{m}(T) - \pi_{3}^{m}(T) \mu_{m}\right)\otimes \mu_{m} \right) Q_{K|m}\left(\mu_{\ell,m} - \mu_{m}\right)\\
& &+ \left.\left(\mu_{m}\otimes \left(\pi_{2}^{m}(T) - \pi_{3}^{m}(T)\mu_{m}\right)\right)Q_{K|m}\left(\mu_{\ell,m} - \mu_{m}\right) \right].
\end{eqnarray*}
Then,  we obtain the following theorem that gives  consistency for   $\widehat{\xi}_{K}^{(n)}$ and a  results that permits  to derive its  asymptotic distribution, and that is  useful for proving consistency of the proposed method for variable selection.

\begin{Theorem}\label{thr1}
For any subset  $K$ of  $ I$,
\begin{enumerate}
	\item[(i)]  $\widehat{\xi}_{K}^{(n)}$ converges almost surely to $\xi_{K}$ as $n \rightarrow +\infty$.
  \item[(ii)]   
\begin{eqnarray*}
  n \widehat{\xi}_{K}^{(n)} &=& \sum_{m=1}^{M}\sqrt{n}\widehat{\Lambda}_{K|m}^{(n)}(\widehat{W}^{(n)}) + \sum_{m=1}^{M}\sum_{\ell=1}^{q}\left(p_{m}\widehat{\Phi}_{\ell,K|m}^{(n)}\left(\widehat{W}^{(n)}\right)  \right.\\
                            &+& \left.
p_{m}p_{\ell|m}\|\widehat{\Psi}_{\ell,K|m}^{(n)}\left(\widehat{W}^{(n)}\right) + \sqrt{n}\Delta_{\ell,K\vert m}\| \right)^{2}\\
\end{eqnarray*}
\end{enumerate}
where $\left(\widehat{\Lambda}_{K|m}^{(n)}\right)_{n \in \mathbb{N}^\ast}$, $\left(\widehat{\Phi}_{\ell,K|m}^{(n)}\right)_{n \in \mathbb{N}^\ast}$ and $\left(\widehat{\Psi}_{\ell,K|m}^{(n)}\right)_{n \in \mathbb{N}^\ast}$  are  sequences  of random operators that  converge  almost surely uniformly to $\Lambda_{K|m}$, $\Phi_{\ell,K|m}$ and $\Psi_{\ell,K|m}$ respectively.
\end{Theorem}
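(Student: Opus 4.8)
The plan is to deduce (i) from the strong law of large numbers together with continuity, and to obtain (ii) from an exact algebraic rearrangement of $n\widehat{\xi}^{(n)}_K$ in which the nonlinear pieces are linearized by elementary identities. For (i), applying the SLLN to the i.i.d.\ averages defining the estimators gives $\widehat{p}^{(n)}_m\to p_m$, $\widehat{\mu}^{(n)}_m\to\mu_m$ and $\widehat{\mu}^{(n)}_{\ell,m}\to\mu_{\ell,m}$ almost surely, and, since $\mathbb{E}(\|X\|^4)<+\infty$ forces $\mathbb{E}(\|X\|^2)<+\infty$, also $\widehat{V}^{(n)}_m\to V_m$ a.s.; on the almost sure event where $\widehat{p}^{(n)}_m>0$ for large $n$ this yields $\widehat{p}^{(n)}_{\ell|m}\to p_{\ell|m}$. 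Because $V_m$ is invertible and $A_KV_mA^{*}_K$ is positive definite, the map $V\mapsto A^{*}_K(A_KVA^{*}_K)^{-1}A_K$ is continuous at $V_m$, so $\widehat{Q}^{(n)}_{K|m}\to Q_{K|m}$ a.s. As every operation entering $\widehat{\xi}^{(n)}_K$ is continuous in these quantities, the continuous mapping theorem gives $\widehat{\xi}^{(n)}_K\to\xi_K$ a.s.

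For (ii) I would start from $n\widehat{\xi}^{(n)}_K=\sum_m n(\widehat{p}^{(n)}_m)^2\widehat{\xi}^{(n)}_{K|m}$ and split $n(\widehat{p}^{(n)}_m)^2=np_m^2+n\big((\widehat{p}^{(n)}_m)^2-p_m^2\big)$. Since $\pi_3^m(\widehat{W}^{(n)})=\sqrt{n}(\widehat{p}^{(n)}_m-p_m)$, the cross term equals $\sqrt{n}\,\pi_3^m(\widehat{W}^{(n)})(\widehat{p}^{(n)}_m+p_m)\widehat{\xi}^{(n)}_{K|m}$, which is linear in $\widehat{W}^{(n)}$; defining $\widehat{\Lambda}^{(n)}_{K|m}(T)=\pi_3^m(T)(\widehat{p}^{(n)}_m+p_m)\widehat{\xi}^{(n)}_{K|m}$ reproduces the first sum of the statement, and by (i) it converges a.s.\ to $\Lambda_{K|m}(T)=2p_m\pi_3^m(T)\xi_{K|m}$. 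It then remains to realize $\sum_m np_m^2\widehat{\xi}^{(n)}_{K|m}=\sum_{m,\ell}\big(p_m\widehat{p}^{(n)}_{\ell|m}\sqrt{n}\|\widehat{\Delta}^{(n)}_{\ell,K|m}\|\big)^2$, where $\widehat{\Delta}^{(n)}_{\ell,K|m}=(I_{\R^p}-\widehat{V}^{(n)}_m\widehat{Q}^{(n)}_{K|m})(\widehat{\mu}^{(n)}_{\ell,m}-\widehat{\mu}^{(n)}_m)$, as the stated sum of squares.

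Writing $\widehat{p}^{(n)}_{\ell|m}\sqrt{n}\|\widehat{\Delta}^{(n)}\|=\sqrt{n}(\widehat{p}^{(n)}_{\ell|m}-p_{\ell|m})\|\widehat{\Delta}^{(n)}\|+p_{\ell|m}\sqrt{n}\|\widehat{\Delta}^{(n)}\|$, I would use the exact ratio identity, valid because $\widehat{p}^{(n)}_{\ell|m}$ is a quotient of two empirical averages whose numerator and denominator deviate from $p_mp_{\ell|m}$ and $p_m$ by $\pi_4^{\ell m}(\widehat{W}^{(n)})/\sqrt{n}$ and $\pi_3^m(\widehat{W}^{(n)})/\sqrt{n}$: namely $\sqrt{n}(\widehat{p}^{(n)}_{\ell|m}-p_{\ell|m})=(\widehat{p}^{(n)}_m)^{-1}\big(\pi_4^{\ell m}(\widehat{W}^{(n)})-\pi_3^m(\widehat{W}^{(n)})p_{\ell|m}\big)$. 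This dictates setting $\widehat{\Phi}^{(n)}_{\ell,K|m}(T)=(\widehat{p}^{(n)}_m)^{-1}\big(\pi_4^{\ell m}(T)-\pi_3^m(T)p_{\ell|m}\big)\|\widehat{\Delta}^{(n)}_{\ell,K|m}\|$, which converges to $\Phi_{\ell,K|m}$ by (i), and choosing $\widehat{\Psi}^{(n)}_{\ell,K|m}$ so that $\widehat{\Psi}^{(n)}_{\ell,K|m}(\widehat{W}^{(n)})=\sqrt{n}(\widehat{\Delta}^{(n)}_{\ell,K|m}-\Delta_{\ell,K|m})$; then $p_m\widehat{\Phi}^{(n)}(\widehat{W}^{(n)})+p_mp_{\ell|m}\|\widehat{\Psi}^{(n)}(\widehat{W}^{(n)})+\sqrt{n}\,\Delta_{\ell,K|m}\|=p_m\widehat{p}^{(n)}_{\ell|m}\sqrt{n}\|\widehat{\Delta}^{(n)}\|$, whose square summed over $\ell,m$ is exactly $\sum_m np_m^2\widehat{\xi}^{(n)}_{K|m}$, completing the identity.

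The substantive step, and the main obstacle, is to exhibit $\widehat{\Psi}^{(n)}_{\ell,K|m}$ as a genuine linear operator on $\mathcal{E}$ converging a.s.\ uniformly to the $\Psi_{\ell,K|m}$ of the statement, i.e.\ to linearize $\sqrt{n}(\widehat{\Delta}^{(n)}-\Delta)$ exactly. The same ratio identity yields exact linear-in-$\widehat{W}^{(n)}$ expressions for $\sqrt{n}(\widehat{\mu}^{(n)}_m-\mu_m)$, $\sqrt{n}(\widehat{\mu}^{(n)}_{\ell,m}-\mu_{\ell,m})$ and, after expanding $\widehat{\mu}^{(n)}_m\otimes\widehat{\mu}^{(n)}_m-\mu_m\otimes\mu_m=(\widehat{\mu}^{(n)}_m-\mu_m)\otimes\widehat{\mu}^{(n)}_m+\mu_m\otimes(\widehat{\mu}^{(n)}_m-\mu_m)$, for $\sqrt{n}(\widehat{V}^{(n)}_m-V_m)$; the resolvent identity $\widehat{B}^{-1}-B^{-1}=-\widehat{B}^{-1}(\widehat{B}-B)B^{-1}$, with $B=A_KV_mA^{*}_K$ and $\widehat{B}=A_K\widehat{V}^{(n)}_mA^{*}_K$, linearizes the inverse and hence $\sqrt{n}(\widehat{Q}^{(n)}_{K|m}-Q_{K|m})$ and, by a product split, $\sqrt{n}(\widehat{V}^{(n)}_m\widehat{Q}^{(n)}_{K|m}-V_mQ_{K|m})$. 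Collecting these into $\sqrt{n}(\widehat{\Delta}^{(n)}-\Delta)$ defines $\widehat{\Psi}^{(n)}$, whose coefficients are continuous in the empirical and true parameters and so converge a.s.\ by (i); as $\mathcal{E}$ is finite-dimensional, coefficientwise convergence is uniform (operator-norm) convergence. Identifying the limit with the displayed $\Psi_{\ell,K|m}$ hinges on the projector identity $Q_{K|m}V_mQ_{K|m}=Q_{K|m}$, so that $V_mQ_{K|m}$ is idempotent: it gives $\delta(V_mQ_{K|m})=(\delta V_m)Q_{K|m}-V_mQ_{K|m}(\delta V_m)Q_{K|m}$, whence $-\delta(V_mQ_{K|m})(\mu_{\ell,m}-\mu_m)=-(I_{\R^p}-V_mQ_{K|m})(\delta V_m)Q_{K|m}(\mu_{\ell,m}-\mu_m)$, which is precisely why the common factor $I_{\R^p}-V_mQ_{K|m}$ can be pulled out of every term and why the $\pi_5^m$, $\pi_2^m$ and $\pi_3^m$ contributions arrange themselves into the exact expression written for $\Psi_{\ell,K|m}$.
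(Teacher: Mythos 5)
Your proof is correct and takes essentially the same route as the paper: part (i) via the SLLN and continuity of the operations defining the criterion, and part (ii) via the identical splitting of $n\bigl(\widehat{p}^{(n)}_m\bigr)^2\widehat{\xi}^{(n)}_{K|m}$ into the $\sqrt{n}\,\widehat{\Lambda}^{(n)}_{K|m}(\widehat{W}^{(n)})$ term plus a sum of squares assembled from exact (not asymptotic) linearizations of the empirical estimators in terms of $\widehat{W}^{(n)}$, which is precisely the content of the paper's Lemmas 1 and 2. Your only deviations are harmless algebraic variants: you hat the denominator $\bigl(\widehat{p}^{(n)}_m\bigr)^{-1}$ rather than the factor $\widehat{p}^{(n)}_{\ell|m}$ in the exact ratio identity defining $\widehat{\Phi}^{(n)}_{\ell,K|m}$, and you reach the exact linearization of $\sqrt{n}\bigl(\widehat{\Delta}^{(n)}_{\ell,K|m}-\Delta_{\ell,K|m}\bigr)$ through the resolvent identity together with the projector relation $Q_{K|m}V_mQ_{K|m}=Q_{K|m}$, whereas the paper directly asserts the equivalent identity $\sqrt{n}\,\widehat{\Delta}^{(n)}_{\ell,K|m}=\bigl(I_{\R^p}-V_mQ_{K|m}\bigr)\bigl[\,\cdots\bigr]+\sqrt{n}\,\Delta_{\ell,K|m}$ (which rests on the same projector relations), so both constructions yield admissible sequences $\widehat{\Phi}^{(n)}_{\ell,K|m}$, $\widehat{\Psi}^{(n)}_{\ell,K|m}$ converging almost surely uniformly to the same limits $\Phi_{\ell,K|m}$ and $\Psi_{\ell,K|m}$.
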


\subsection{Non-parametric smoothing procedure}
\label{NE}

As it is well known, empirical estimators could be not suitable, because many cell incidences could be very small or zero, so that corresponding estimates could be poor or non-existent (see Aspakourov and Krzanowski 2000).  To overcome these problems, we propose in this section a non-parametric smoothing procedure for estimating the criterion (\ref{crit}). This procedure is based on smoothing as it is done in Aspakourov and Krzanowki (2000) and Mahat et al. (2007). Denote by $D$ the dissimilarity  defined on $\{1,\cdots,M\}^2$ by
\[
D(m,k)=\Vert \textrm{\textbf{x}}_m-  \textrm{\textbf{x}}_k\Vert^2,
\]
where, for any $k\in\{1,\cdots,M\}$, $\textrm{\textbf{x}}_k=\left(\textrm{\textbf{x}}_k^{(1)},\cdots,\textrm{\textbf{x}}_k^{(d)}\right)^T\in\{0,1\}^d$  is  the vector of binary variables satisfying $1+\sum_{j=1}^d\textrm{\textbf{x}}_k^{(j)}\,2^{j-1}=k$.  Then, given a smoothing parameter $\lambda\in]0,1[$, we consider the weights $w(m,k)=\lambda^{D(m,k)}$  and estimate  $p_{m}$, $p_{\ell|m}$, $\mu_{m}$,  $\mu_{\ell,m}$  and  $V_{m}$ respectively by:
\[
\widetilde{p}^{(n)}_{m} = \frac{\sum_{j=1}^Mw(m,j) \widehat{N}^{(n)}_{j}}{\sum_{k=1}^M\sum_{j=1}^Mw(m,j) \widehat{N}^{(n)}_{j}},\,\,\,\,  \widetilde{p}^{(n)}_{\ell\vert m} = \frac{\sum_{j=1}^Mw(m,j) \widehat{N}^{(n)}_{\ell , j}}{\widetilde{p}^{(n)}_{m}\sum_{k=1}^M\sum_{j=1}^Mw(m,j) \widehat{N}^{(n)}_{\ell ,j}},
\]
\[
\widetilde{\mu}^{(n)}_{m} = \left\{ \sum_{j=1}^{M} w(m,j)\widehat{N}^{(n)}_{j} \right\}^{-1} \sum_{j=1}^{M}\left\{ w(m,j)\sum_{n=1}^{n} X_i\textrm{\textbf{1}}_{\{U_i=j\}} \right\}, 
\]
\[
\widetilde{\mu}^{(n)}_{\ell ,m} = \left\{ \sum_{j=1}^{M} w(m,j)\widehat{N}^{(n)}_{\ell , j} \right\}^{-1} \sum_{j=1}^{M}\left\{ w(m,j)\sum_{n=1}^{n} X_i\textrm{\textbf{1}}_{\{Z_i=\ell, U_i=j\}} \right\}
\]
and
\[
\widetilde{V}^{(n)}_{m} = \left\{ \sum_{j=1}^{M} w(m,j)\widehat{N}^{(n)}_{ j} \right\}^{-1} \sum_{j=1}^{M}\left\{ w(m,j)\sum_{n=1}^{n} \textrm{\textbf{1}}_{\{ U_i=j\}}  (X_i-\widetilde{\mu}^{(n)}_{m}) \otimes (X_i-\widetilde{\mu}^{(n)}_{m})\right\}.
\]
Then, we obtain an estimator $\widetilde{\xi}_K^{(n)}$ of the criterion by replacing in (\ref{qk}), (\ref{critm}) and (\ref{crit}) the parameters  $p_{m}$, $p_{\ell|m}$, $\mu_{m}$,  $\mu_{\ell,m}$  and  $V_{m}$ by their estimators given above.

\section{Selection of variables}
\label{sec3.3}
Estimation of  $I_{1}$
reduces to that of $\sigma$ and $s$. In this section, estimators for these two parameters are
proposed. When empirical estimators are used, we then obtain consistency properties for these estimators.
\subsection{Estimation of $\sigma$ and $s$}
\label{subsec:estimpar}
Let us consider a sequence $\left(  f_{n}\right)  _{n\in\mathbb{N}^{\ast}}$ of
\ functions from $I$ \ to $\mathbb{R}_{+}$ such that  $f_n\sim  n^{-\alpha}f$ where
$\alpha\in\left]  0,1/2\right[  $ and $f$ is  a strictly decreasing function from
$I$ to $\mathbb{R}_{+}$.
Then, recalling that $K_{i}=I-\left\{  i\right\}  $, we put
\[
\widehat{\phi}_{i}^{\left(  n\right)  }=\widehat{\xi}_{K_{i}}^{\left(
n\right)  }+f_{n}\left(  i\right)  \text{\ \ \ \ (}i\in I\text{)}
\]
and we take as estimator of $\sigma$ the random permutation $\widehat{\sigma
}^{\left(  n\right)  }$ of $I$ such that

\[
\widehat{\phi}_{\widehat{\sigma}^{\left(  n\right)  }\left(  1\right)
}^{\left(  n\right)  }\geq\widehat{\phi}_{\widehat{\sigma}^{\left(  n\right)
}\left(  2\right)  }^{\left(  n\right)  }\geq\cdots\geq\widehat{\phi
}_{\widehat{\sigma}^{\left(  n\right)  }\left(  p\right)  }^{\left(  n\right)
}
\]
and if  $\widehat{\phi}_{\widehat{\sigma}^{\left(  n\right)  }\left(
i\right)  }^{\left(  n\right)  }=\widehat{\phi}_{\widehat{\sigma}^{\left(
n\right)  }\left(  j\right)  }^{\left(  n\right)  }$  with  $i<j$,
then $\widehat{\sigma}^{\left(  n\right)  }\left(  i\right)
<\widehat{\sigma}^{\left(  n\right)  }\left(  j\right)$.
Furthermore, we consider the random set  $\widehat{J}_{i}^{\left(  n\right)  }=\left\{
\widehat{\sigma}^{\left(  n\right)  }\left(  j\right)  ;\;1\leq j\leq
i\right\}  $ and the random variable
\[
\widehat{\psi}_{i}^{\left(  n\right)  }=\widehat{\xi}_{\widehat{J}%
_{i}^{\left(  n\right)  }}^{\left(  n\right)  }+g_{n}\left(  \widehat{\sigma
}^{\left(  n\right)  }\left(  i\right)  \right)  \;\;\;\;\;\text{(}i\in
I\text{)}
\]
where $\left(  g_{n}\right)  _{n\in\mathbb{N}^{\ast}}$ is a sequence of
\ functions from $I$ to \ $\mathbb{R}_{+}$ such that  $g_n\sim n^{-\beta}g$ where  $\beta
\in\left]  0,1\right[  $ and $g$ is  a strictly increasing function.
Then, we take as estimator of $s$ the random variable
\[
\widehat{s}^{\left(  n\right)  }=\min\left\{  i\in I\;/\;\widehat{\psi}%
_{i}^{\left(  n\right)  }=\min_{j\in I}\left(  \widehat{\psi}_{j}^{\left(
n\right)  }\right)  \right\}  .
\]
 The variable
selection is achieved by taking the random set
\[
\widehat{I}_{1}^{\left(  n\right)  }=\left\{  \widehat{\sigma}^{\left(
n\right)  }\left(  i\right)  \;;\;1\leq i\leq\widehat{s}^{\left(  n\right)
}\right\}
\]
  as estimator of $I_{1}$.

\subsection{Consistency}
\label{subsec:cons}

When  the empirical estimators defined in section \ref{EE} are considerd, we   establish  consistency for the preceding estimators. We first  give a  proposition that is useful  for proving the consistency theorem.  There exist
$t\in I$ and $\left(  m_{1},\cdots,m_{t}\right)  \in I^{t}$ such that
\ $m_{1}+\cdots+m_{t}=p$, and
$\xi_{K_{\sigma (  1)  }}    =\cdots=\xi_{K_{\sigma(
m_{1})  }}>\xi_{K_{\sigma (  m_{1}+1)  }}=\cdots
=\xi_{K_{\sigma (  m_{1}+m_{2})  }}>\cdots
\cdots   >\xi_{K_{\sigma (  m_{1}+\cdots+m_{t-1}+1)  }}=\cdots
=\xi_{K_{\sigma (  m_{1}+\cdots+m_{t})  }}$.
Then considering \ the set $E=\left\{  \ell\in\mathbb{N}^{\ast};1\leq \ell\leq
t,m_{\ell}\geq2\right\}  $ and putting $m_{0}:=0$ and $F_{\ell}:=\left\{\sum_{k=0}^{\ell-1}m_k+1,\cdots,\sum_{k=0}^{\ell}m_k-1\right\}  $
\ ($\ell\in\left\{  1,\cdots,t\right\}  $), we have:

\begin{Prop}\label{prop2}
If  $E \neq \varnothing$, then for any  $\ell \in E$ and  any $i \in F_{\ell}$, the sequence $n^{\alpha}\left(\widehat{\xi}^{(n)}_{K_{\sigma(i)}} - \widehat{\xi}^{(n)}_{K_{\sigma(i+1)}}\right)$ converges in probability  to $0$, as $n \rightarrow +\infty$.
\end{Prop}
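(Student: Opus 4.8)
The plan is to prove first a rate statement that holds for every subset $K$ of $I$, namely $\sqrt{n}\bigl(\widehat{\xi}^{(n)}_K-\xi_K\bigr)=O_P(1)$, and then to combine it with the fact that indices lying in the same block share the same value of $\xi_{K_{\cdot}}$.

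First I would derive the rate from the exact identity of Theorem~\ref{thr1}(ii). The vectors $\mathcal{W}_i$ are i.i.d., and the assumption $\mathbb{E}(\Vert X\Vert^4)<+\infty$ ensures that $\mathcal{W}$ has a finite second moment (the block $\mathcal{V}$ contains $X\otimes X$, whose squared Hilbert--Schmidt norm equals $\Vert X\Vert^4$); the multivariate central limit theorem then gives that $\widehat{W}^{(n)}$ converges in distribution, so $\widehat{W}^{(n)}=O_P(1)$. Because $\Lambda_{K|m}$, $\Phi_{\ell,K|m}$ and $\Psi_{\ell,K|m}$ are linear, hence bounded, and the associated random operators converge almost surely uniformly to them, a decomposition of the type $\widehat{\Psi}^{(n)}_{\ell,K|m}(\widehat{W}^{(n)})=\Psi_{\ell,K|m}(\widehat{W}^{(n)})+\bigl(\widehat{\Psi}^{(n)}_{\ell,K|m}-\Psi_{\ell,K|m}\bigr)(\widehat{W}^{(n)})$ shows that each of $\widehat{\Lambda}^{(n)}_{K|m}(\widehat{W}^{(n)})$, $\widehat{\Phi}^{(n)}_{\ell,K|m}(\widehat{W}^{(n)})$ and $\widehat{\Psi}^{(n)}_{\ell,K|m}(\widehat{W}^{(n)})$ is $O_P(1)$.

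Next I would subtract $n\xi_K=\sum_{m=1}^{M}\sum_{\ell=1}^{q}n\,p_m^2p_{\ell|m}^2\Vert\Delta_{\ell,K|m}\Vert^2$ from the identity and expand the squares. Abbreviating $\widehat{\Phi}=\widehat{\Phi}^{(n)}_{\ell,K|m}(\widehat{W}^{(n)})$, $\widehat{\Psi}=\widehat{\Psi}^{(n)}_{\ell,K|m}(\widehat{W}^{(n)})$ and $\Delta=\Delta_{\ell,K|m}$, each summand is $\bigl(p_m\widehat{\Phi}+p_mp_{\ell|m}\Vert\widehat{\Psi}+\sqrt{n}\Delta\Vert\bigr)^2$, and the relation $\Vert\widehat{\Psi}+\sqrt{n}\Delta\Vert^2=\Vert\widehat{\Psi}\Vert^2+2\sqrt{n}\langle\widehat{\Psi},\Delta\rangle+n\Vert\Delta\Vert^2$ shows that the term $n\,p_m^2p_{\ell|m}^2\Vert\Delta\Vert^2$ cancels exactly against the corresponding summand of $n\xi_K$. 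What remains is a sum of the leading contributions $\sqrt{n}\,\widehat{\Lambda}^{(n)}_{K|m}(\widehat{W}^{(n)})$, the cross terms $2\sqrt{n}\,p_m^2p_{\ell|m}^2\langle\widehat{\Psi},\Delta\rangle$, and the products $2p_m^2p_{\ell|m}\widehat{\Phi}\,\Vert\widehat{\Psi}+\sqrt{n}\Delta\Vert$; since $\Vert\widehat{\Psi}+\sqrt{n}\Delta\Vert=O_P(\sqrt{n})$ and all other random factors are $O_P(1)$, every surviving term is $O_P(\sqrt{n})$, together with a bounded remainder coming from $\Vert\widehat{\Psi}\Vert^2$ and $\widehat{\Phi}^2$. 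Dividing by $n$ yields $\widehat{\xi}^{(n)}_K-\xi_K=O_P(n^{-1/2})$, i.e. $\sqrt{n}\bigl(\widehat{\xi}^{(n)}_K-\xi_K\bigr)=O_P(1)$.

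Finally, fixing $\ell\in E$ and $i\in F_\ell$, the definition of $F_\ell$ places both $i$ and $i+1$ in the $\ell$-th block, so $\xi_{K_{\sigma(i)}}=\xi_{K_{\sigma(i+1)}}$ and
\[
n^{\alpha}\bigl(\widehat{\xi}^{(n)}_{K_{\sigma(i)}}-\widehat{\xi}^{(n)}_{K_{\sigma(i+1)}}\bigr)
=n^{\alpha-1/2}\Bigl[\sqrt{n}\bigl(\widehat{\xi}^{(n)}_{K_{\sigma(i)}}-\xi_{K_{\sigma(i)}}\bigr)-\sqrt{n}\bigl(\widehat{\xi}^{(n)}_{K_{\sigma(i+1)}}-\xi_{K_{\sigma(i+1)}}\bigr)\Bigr].
\]
The bracket is $O_P(1)$ by the rate statement and $n^{\alpha-1/2}\to0$ since $\alpha<1/2$, so the product tends to $0$ in probability, which is the claim. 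I expect the main obstacle to be the rate estimate of the middle step: one must track orders carefully in the expansion of Theorem~\ref{thr1}(ii), the delicate point being that $\Vert\widehat{\Psi}+\sqrt{n}\Delta\Vert$ is of exact order $\sqrt{n}$ when $\Delta\neq0$, so those terms contribute at the $O_P(\sqrt{n})$ level but no worse, which is precisely what makes the parametric rate survive.
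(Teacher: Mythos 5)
Your proof is correct, and it rests on exactly the same ingredients as the paper's: the identity of Theorem \ref{thr1}(ii), the central limit theorem (valid because $\mathbb{E}\left(\Vert X\Vert^4\right)<+\infty$ makes $\mathcal{W}$ square-integrable) giving $\widehat{W}^{(n)}=O_P(1)$, the almost sure uniform convergence of the random operators toward the bounded linear limits $\Lambda_{K|m}$, $\Phi_{\ell,K|m}$, $\Psi_{\ell,K|m}$, the expansion of $\Vert\widehat{\Psi}+\sqrt{n}\Delta\Vert^2$, and the condition $\alpha<1/2$. The organization, however, is genuinely different. The paper never centers $\widehat{\xi}^{(n)}_K$ at $\xi_K$; it decomposes $n^{\alpha}\left(\widehat{\xi}^{(n)}_{K_{\sigma(i)}}-\widehat{\xi}^{(n)}_{K_{\sigma(i+1)}}\right)$ directly into four differences $\widehat{A}^{(n)}_{i}+\widehat{B}^{(n)}_{i}+\widehat{C}^{(n)}_{i}+\widehat{D}^{(n)}_{i}$ (the $\Lambda$, $\Phi^2$, $\Vert\Psi+\sqrt{n}\Delta\Vert^2$ and cross contributions) and makes the $n\Vert\Delta\Vert^2$ terms cancel \emph{between the two subsets} $K_{\sigma(i)}$ and $K_{\sigma(i+1)}$ inside $\widehat{C}^{(n)}_{i}$, whereas you cancel them against $n\xi_K$ itself, obtaining the standalone rate statement $\sqrt{n}\left(\widehat{\xi}^{(n)}_K-\xi_K\right)=O_P(1)$ for \emph{every} subset $K$ (including those with $\xi_K>0$), and only afterwards invoke the block equality $\xi_{K_{\sigma(i)}}=\xi_{K_{\sigma(i+1)}}$. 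Your route buys two things. First, a reusable intermediate result: $\sqrt{n}$-consistency of the criterion estimator, of independent interest beyond this proposition. Second, a cleaner justification of the crucial cancellation: the paper supports it by the line $\Vert\Delta_{\ell,K_{\sigma(i)}\vert m}\Vert=\xi_{K_{\sigma(i)}}=\xi_{K_{\sigma(i+1)}}=\Vert\Delta_{\ell,K_{\sigma(i+1)}\vert m}\Vert$, which is false as stated, since $\xi_K$ is the weighted sum $\sum_{m}\sum_{\ell}p_m^2p_{\ell|m}^2\Vert\Delta_{\ell,K\vert m}\Vert^2$ and not any individual norm; only the aggregate cancellation is actually available, and that is precisely what your centering at $n\xi_K$ uses, so your argument silently repairs this misstatement. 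What the paper's direct decomposition buys in exchange is explicit bounds on each of the four pieces; your argument reaches the same conclusion with less bookkeeping.
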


\bigskip

\noindent From a proof that uses the preceding proposition and  that is similar to the proof  of Theorem 3.1 in Nkiet (2012), we obtain the consistency theorem given below. 

\begin{Theorem}\label{thrconv}
We have: 

(i) $\lim_{n\rightarrow+\infty}P\left(  \widehat{\sigma}^{\left(
n\right)  }=\sigma\right)  =1;$

(ii) $\widehat{s}^{\left(  n\right)  }$  converges in
probability to $s$,  as $n\rightarrow+\infty$.
\end{Theorem}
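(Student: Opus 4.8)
The plan is to establish the two assertions separately, drawing on Theorem \ref{thr1} (strong consistency of $\widehat{\xi}^{(n)}_K$ together with the quadratic expansion of $n\widehat{\xi}^{(n)}_K$), Proposition \ref{prop2} (the rate of the within-tie differences), and Proposition \ref{prop1} (the characterization $\xi_K=0\Leftrightarrow I_1\subset K$). The governing idea throughout is that the deterministic penalties $f_n\sim n^{-\alpha}f$ and $g_n\sim n^{-\beta}g$ are tuned so that they vanish, and hence cannot disturb the strict inequalities between distinct values of the criterion, while simultaneously dominating the stochastic fluctuations of $\widehat{\xi}^{(n)}$ inside the blocks on which the criterion is constant. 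Since $I$ is finite, every statement reduces to controlling finitely many pairwise comparisons, each of which must hold with probability tending to one.

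For (i) I would show $P(\widehat{\sigma}^{(n)}=\sigma)\to 1$ by examining all pairwise comparisons of the quantities $\widehat{\phi}^{(n)}_i=\widehat{\xi}^{(n)}_{K_i}+f_n(i)$, splitting into two regimes according to the block structure defined before Proposition \ref{prop2}. When $i,j$ lie in distinct blocks, so that $\xi_{K_i}\neq\xi_{K_j}$, Theorem \ref{thr1}(i) gives $\widehat{\xi}^{(n)}_{K_i}\to\xi_{K_i}$ and $\widehat{\xi}^{(n)}_{K_j}\to\xi_{K_j}$ almost surely while $f_n(i)-f_n(j)\to 0$, so the sign of $\widehat{\phi}^{(n)}_i-\widehat{\phi}^{(n)}_j$ eventually agrees with that of the nonzero gap $\xi_{K_i}-\xi_{K_j}$. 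When $i,j$ are tied indices inside a block $F_\ell$ with $\ell\in E$, Proposition \ref{prop2} applied to consecutive indices (and summed by a telescoping argument for non-consecutive ones) yields $\widehat{\xi}^{(n)}_{K_{\sigma(i)}}-\widehat{\xi}^{(n)}_{K_{\sigma(j)}}=o_P(n^{-\alpha})$, whereas $f_n(\sigma(i))-f_n(\sigma(j))\sim n^{-\alpha}\bigl(f(\sigma(i))-f(\sigma(j))\bigr)$ has a fixed nonzero sign fixed by the strict monotonicity of $f$ and the tie-breaking rule (A2). Thus the penalty dominates and enforces the correct order within each block, and intersecting the finitely many comparisons gives (i).

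For (ii) I would condition on $\{\widehat{\sigma}^{(n)}=\sigma\}$, which has probability tending to $1$ by (i); on this event $\widehat{J}^{(n)}_i=\{\sigma(1),\dots,\sigma(i)\}=:J_i$ and $\widehat{\psi}^{(n)}_i=\widehat{\xi}^{(n)}_{J_i}+g_n(\sigma(i))$. Proposition \ref{prop1} fixes the two regimes: $I_1\subset J_i$ exactly when $i\ge s$, so $\xi_{J_i}>0$ for $i<s$ and $\xi_{J_i}=0$ for $i\ge s$. For $i<s$, Theorem \ref{thr1}(i) gives $\widehat{\psi}^{(n)}_i\to\xi_{J_i}>0$, so these indices stay bounded away from the minimum, which rules out under-selection. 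For $i\ge s$, the expansion in Theorem \ref{thr1}(ii) with $\Delta_{\ell,J_i|m}=0$ shows $n\widehat{\xi}^{(n)}_{J_i}=O_P(1)$, i.e. $\widehat{\xi}^{(n)}_{J_i}=O_P(n^{-1})=o_P(n^{-\beta})$, so that $\widehat{\psi}^{(n)}_i$ is governed by the strictly increasing penalty; consequently its minimum over the correct range $i\ge s$ is attained at the most parsimonious index $i=s$ with probability tending to one. Since the values for $i\ge s$ tend to $0$ while those for $i<s$ remain positive, $s$ is the global minimizer, whence $P(\widehat{s}^{(n)}=s)\to1$ and in particular $\widehat{s}^{(n)}\to s$ in probability.

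The main obstacle in both parts is the matching of rates on the constant blocks of the criterion, rather than the routine almost sure convergence of the estimators. In (i) the argument closes only because Proposition \ref{prop2} certifies that the within-tie fluctuations are $o_P(n^{-\alpha})$, strictly faster than the $n^{-\alpha}$ scale of $f_n$; in (ii) it closes only because Theorem \ref{thr1}(ii) certifies the $O_P(n^{-1})$ rate of $\widehat{\xi}^{(n)}_{J_i}$ on correct models, strictly faster than the $n^{-\beta}$ scale of $g_n$. Securing these two rate comparisons is where the real work lies; once they are available, the finite combinatorics of the comparisons is immediate, and the two conclusions in fact combine to give $P(\widehat{I}_1^{(n)}=I_1)\to 1$.
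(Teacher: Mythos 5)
Your treatment of part (i) is sound and is essentially the intended argument (the paper itself gives no details, deferring to Nkiet (2012) plus Proposition \ref{prop2}): across blocks, the almost sure convergence from Theorem \ref{thr1}(i) and the vanishing of $f_n$ settle the comparisons, while within a tie block Proposition \ref{prop2}, telescoped over consecutive indices, bounds the fluctuations by $o_P(n^{-\alpha})$, which are dominated by the penalty gap $f_n(\sigma(i))-f_n(\sigma(j))\sim n^{-\alpha}(f(\sigma(i))-f(\sigma(j)))>0$, whose sign is fixed by (A2) and the strict decrease of $f$. Intersecting the finitely many comparisons is then immediate, as you say.

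Part (ii), however, has a genuine gap at the comparison between $i=s$ and $i>s$. The penalty in $\widehat{\psi}^{(n)}_i$ is $g_n(\widehat{\sigma}^{(n)}(i))$, a function of the \emph{label} of the variable ranked $i$-th, not of the rank $i$; you transcribe this correctly (on $\{\widehat{\sigma}^{(n)}=\sigma\}$ you write $\widehat{\psi}^{(n)}_i=\widehat{\xi}^{(n)}_{J_i}+g_n(\sigma(i))$), but then argue as if $i\mapsto g_n(\sigma(i))$ were increasing on $\{s,\dots,p\}$. Assumption (A2) only forces $\sigma(s+1)<\cdots<\sigma(p)$ within the zero-criterion block; it says nothing about the position of $\sigma(s)\in I_1$ relative to $\sigma(s+1)=\min I_0$. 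Concretely, take $p=2$ with $I_1=\{2\}$, $I_0=\{1\}$: then $\sigma(1)=2$, $\sigma(2)=1$, $s=1$, and on $\{\widehat{\sigma}^{(n)}=\sigma\}$ one has $\widehat{\psi}^{(n)}_1=\widehat{\xi}^{(n)}_{\{2\}}+g_n(2)$ and $\widehat{\psi}^{(n)}_2=\widehat{\xi}^{(n)}_{\{1,2\}}+g_n(1)$. Both $\widehat{\xi}^{(n)}$ terms are $O_P(n^{-1})$ by exactly your rate argument (both sets contain $I_1$, so the corresponding $\Delta$'s vanish), while $g_n(2)-g_n(1)\sim n^{-\beta}(g(2)-g(1))>0$ dominates since $\beta<1$; hence $\widehat{\psi}^{(n)}_2<\widehat{\psi}^{(n)}_1$ with probability tending to one, so $\widehat{s}^{(n)}=2\neq s$. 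Thus the step ``the minimum over $i\geq s$ is attained at the most parsimonious index $i=s$'' fails under the paper's literal definition, and no argument can repair it without a further hypothesis such as $\sigma(s)<\min(I-I_1)$. Your rate comparison does close the proof when the penalty is rank-based, i.e.\ $g_n(i)$ in place of $g_n(\widehat{\sigma}^{(n)}(i))$ (then $g_n(i)-g_n(s)\sim n^{-\beta}(g(i)-g(s))>0$ for $i>s$ dominates the $O_P(n^{-1})$ fluctuations); this is presumably what is intended, but as written your proof silently substitutes that reading, and that substitution is precisely where the difficulty lies.
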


\bigskip

\noindent As a consequence of this theorem, we easily obtain:
\[
\lim_{n\rightarrow+\infty
}P\left(  \widehat{I}_{1}^{\left(  n\right)  }=I_{1}\right)  =1. 
\]
This shows the consistency of our method for selecting variables in discriminant analysis with mixed variables.

\section{Numerical experiments}
\label{sim}
In this section, we report results of  simulations made for studying  properties of the proposed method. Several issues are adressed: the influence of the penalty functions,    the type of estimator and  the parameters $\alpha$ and $\beta$ on the performance of the  procedure, optimal choice of these parameters  and comparison with the  method proposed in Mahat et al. (2007). Since this latter method  consider only the case of two groups, we  have placed ourselves in this framework although our method can be used for more than two groups. Each data set was generated as follows: $X_i$ is generated from a multivariate normal distribution in $\mathbb{R}^5$ with mean $\mu$ and covariance matrix  given by $\Gamma =\frac{1}{2}(I_5+J_5)$, where $I_5$ is the $5$-dimensional  identity matrix and $J_5$ is the $5\times 5$ matrix whose elements are all equal to $1$; $U^{(i)}$ is generated from the uniform distribution in $\{1,\cdots,8\}$, that is equivalent to generate $Y_i$ as random vector with $3$ coordinates being binary random variables. Two groups of data was generated as indicated above with $\mu=\mu_1=(0,0,0,0,0)^T$ for the first group (for which $Z_i=1$), and   $\mu=\mu_2=(1/4,0,1/2,0,3/4)^T$ for the second group (for which $Z_i=2$) .   Our simulated data is based on two independent data sets: training data and test data, each with sample size $n=100,\, 200,\,300,\,400,\,500$ and with size $n_1=n_2=n/2$ for the two groups.  The training data is used for selecting variables  and the test data is used for computing the classification capacity (CC), that is the proportion of correct classification (obtained by using the rule (\ref{fisher})) after variable selection. The average of CC over 1000 independent replications is used for measuring the performance of the methods.

\subsection{Influence  of penalty functions and type of estimator}

In order to evaluate the impact of penalty functions on the performance of our method we took $f_n(i)=n^{-1/4}/ h_k(i)$ and  $g_n(i)=n^{-1/4} h_k(i)$, $k=1,\cdots ,13$, with  $h_1(x)=x$, $h_2(x)=x^{0.1}$,   $h_3(x)=x^{0.5}$,  $h_4(x)=x^{0.9}$,   $h_5(x)=x^{10}$,  $h_6(x)=\ln(x)$,   $h_7(x)=\ln(x)^{0.1}$,  $h_8(x)=\ln(x)^{0.5}$,  $h_9(x)=\ln(x)^{0.9}$,  $h_{10}(x)=x\ln(x)$, $h_{11}(x)=\left(x\ln(x)\right)^{0.1}$, $h_{12}(x)=\left(x\ln(x)\right)^{0.5}$, $h_{13}(x)=\left(x\ln(x)\right)^{0.9}$. For each of these functions, we computed classifcation capacity by using both  empirical estimators and non-parametric smoothing procedure introduced in section \ref{NE}. For this latter type of estimator, the smoothing parameter $\lambda$ was computed from a cross validation method on the training sample in order to maximize classification capacity.  The results are given in Table 1. Comparing the results it is observed that  there is no significant difference between the results obtained for the different functions, and also between the two types of estimators. So, it seems that    chosing    penalty functions have no influence on the performance of our method. 

\bigskip

\begin{table}
\centering \caption{Average of classification capacity (CC)  over 1000 replications with different penalty functions  $f_n=n^{-1/4}/ h_k$ and  $g_n=n^{-1/4} h_k$, $k=1,\cdots,13$, and  $\alpha=0.25$, $\beta=0.5$.}
{\setlength{\tabcolsep}{0.08cm} 
\renewcommand{\arraystretch}{1} 
\begin{center}
{\begin{tabular}{ccccccccccccccc}
\hline
  &  &&& Empirical Estimator &&& Non-parametric Estimator\\
\hline
&\textbf{}&          \\
 n = 100(n$_{1}$=n$_{2}$=50) && Function &&CC &&& CC \\       
\hline 
\hline
 && $h_{1}$ && 0.60800 &&& 0.60800  \\ 
 && $h_{2}$ && 0.60900 &&& 0.60900  \\  
 && $h_{3}$ && 0.61000 &&& 0.61000 \\  
 && $h_{4}$ && 0.60900 &&& 0.60900  \\
 && $h_{5}$ && 0.60800 &&& 0.60800 \\
 && $h_{6}$ && 0.61028 &&& 0.61028  \\
 && $h_{7}$ && 0.60903 &&& 0.60903  \\
 && $h_{8}$ && 0.61066 &&& 0.61066  \\
 && $h_{9}$ && 0.60898 &&& 0.60898  \\
 && $h_{10}$ && 0.60842 &&& 0.60842  \\
 && $h_{11}$ && 0.60948 &&& 0.60948  \\
 && $h_{12}$ && 0.60915 &&& 0.60915  \\
 && $h_{13}$ && 0.60908 &&& 0.60908  \\
\hline 
 n = 300(n$_{1}$=n$_{2}$=150) && Function &&  &&&  &&&   \\
\hline
\hline
 && $h_{1}$ && 0.56421 &&& 0.56424  \\ 
 && $h_{2}$ && 0.56328 &&& 0.56332 \\  
 && $h_{3}$ && 0.56417 &&& 0.56417  \\  
 && $h_{4}$ && 0.56346 &&& 0.56346  \\
 && $h_{5}$ && 0.56382 &&& 0.56382 \\
 && $h_{6}$ && 0.56343 &&& 0.56343  \\
 && $h_{7}$ && 0.56374 &&& 0.56374  \\
 && $h_{8}$ && 0.56354 &&& 0.56354  \\
 && $h_{9}$ && 0.56312 &&& 0.56312  \\
 && $h_{10}$ && 0.56379 &&& 0.56379  \\
 && $h_{11}$ && 0.56404 &&& 0.56404  \\
 && $h_{12}$ && 0.56345 &&& 0.56345  \\
 && $h_{13}$ && 0.56375 &&& 0.56375  \\
\hline
 n = 500(n$_{1}$=n$_{2}$=250) && Function && &&&  &&& \\
\hline
\hline
 && $h_{1}$ && 0.54998 &&& 0.54998  \\ 
 && $h_{2}$ && 0.55047 &&& 0.55067  \\  
 && $h_{3}$ && 0.55032 &&& 0.55039  \\  
 && $h_{4}$ && 0.55049 &&& 0.55058 \\ 
 && $h_{5}$ && 0.54982 &&& 0.55982  \\ 
 && $h_{6}$ && 0.55058 &&& 0.55062  \\
 && $h_{7}$ && 0.55050 &&& 0.55054  \\
 && $h_{8}$ && 0.55008 &&& 0.55013  \\
 && $h_{9}$ && 0.54972 &&& 0.54934  \\
 && $h_{10}$ && 0.55013 &&& 0.55018  \\
 && $h_{11}$ && 0.55036 &&& 0.55037  \\
 && $h_{12}$ && 0.55046 &&& 0.55046  \\
 && $h_{13}$ && 0.55028 &&& 0.55028  \\
\hline
\end{tabular}}         
\end{center}}
\label{table:tab1}
\end{table}

\subsection{Influence of parameters $\alpha$ and $\beta$}
Since tuning parameters may have impact on the performance of a statistical procedure, it is important to study their influence. That is why  numerical experiments was made  in order to appreciate the influence of $\alpha$ and $\beta$ on the performance of our method. For doing that, we made  simulations as indicated above by taking  
\begin{equation}\label{penal}
f_n(i)=n^{-\alpha}/ h_7(i)\,\,\,\textrm{ and }\,\,\,  g_n(i)=n^{-\beta} h_7(i)
\end{equation} 
with $\alpha=0.1$, $0.2$, $0.3$, $0.4$, $0.45$, and $\beta$ varying in $[0,1[$.  The results are reported  in Fig. 1 (a)--(c) and show that the parameters $\alpha$ and $\beta$ clearly have impact on the performance of the method. Indeed, the obtained curves vary as $\alpha$ varies. Further,  for a fixed $\alpha$, CC  is not constant as  $\beta$ varies  in $[0,1[$. Finally, choosing the proper values for $\alpha$ and $\beta$  is an  important issue to address  in practice. An approach for doing that via cross-model validation  is described in the following section.
\bigskip

\begin{figure}[h!]
\centering
\includegraphics[width=8cm,height=8cm]{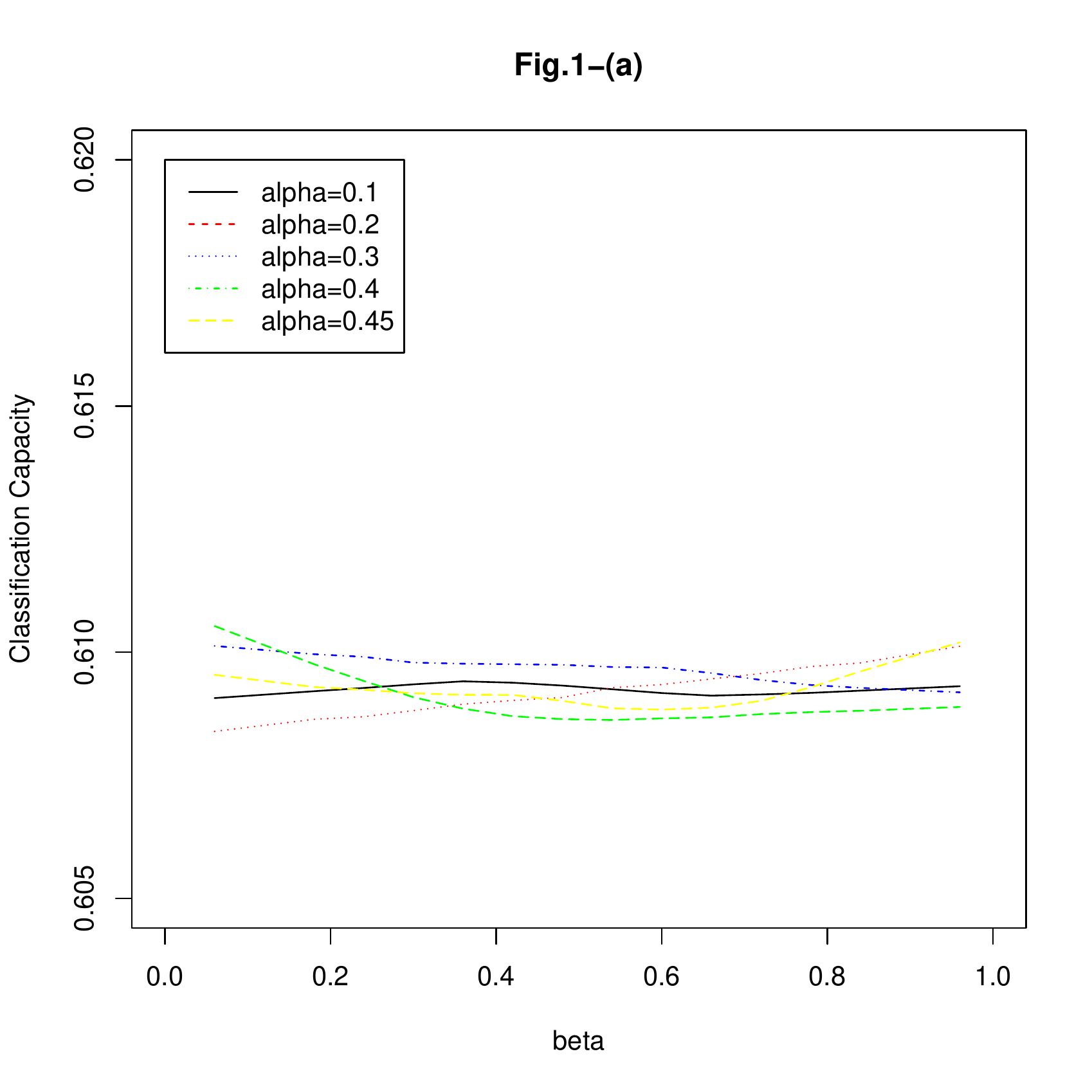}
\end{figure}
\begin{figure}[h!]
\centering
\includegraphics[width=8cm,height=8cm]{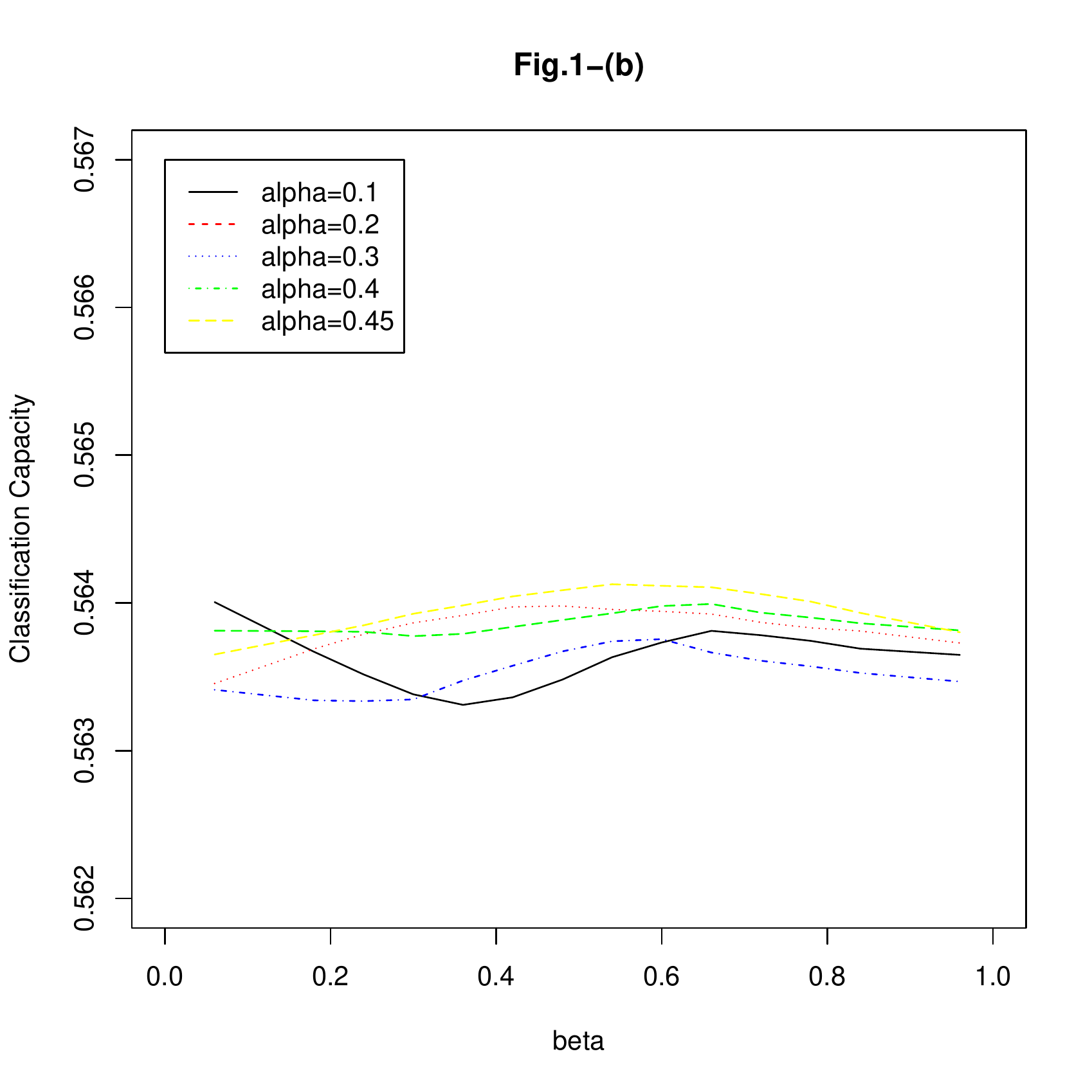}
\end{figure}
\begin{figure}[h!]
\centering
\includegraphics[width=8cm,height=8cm]{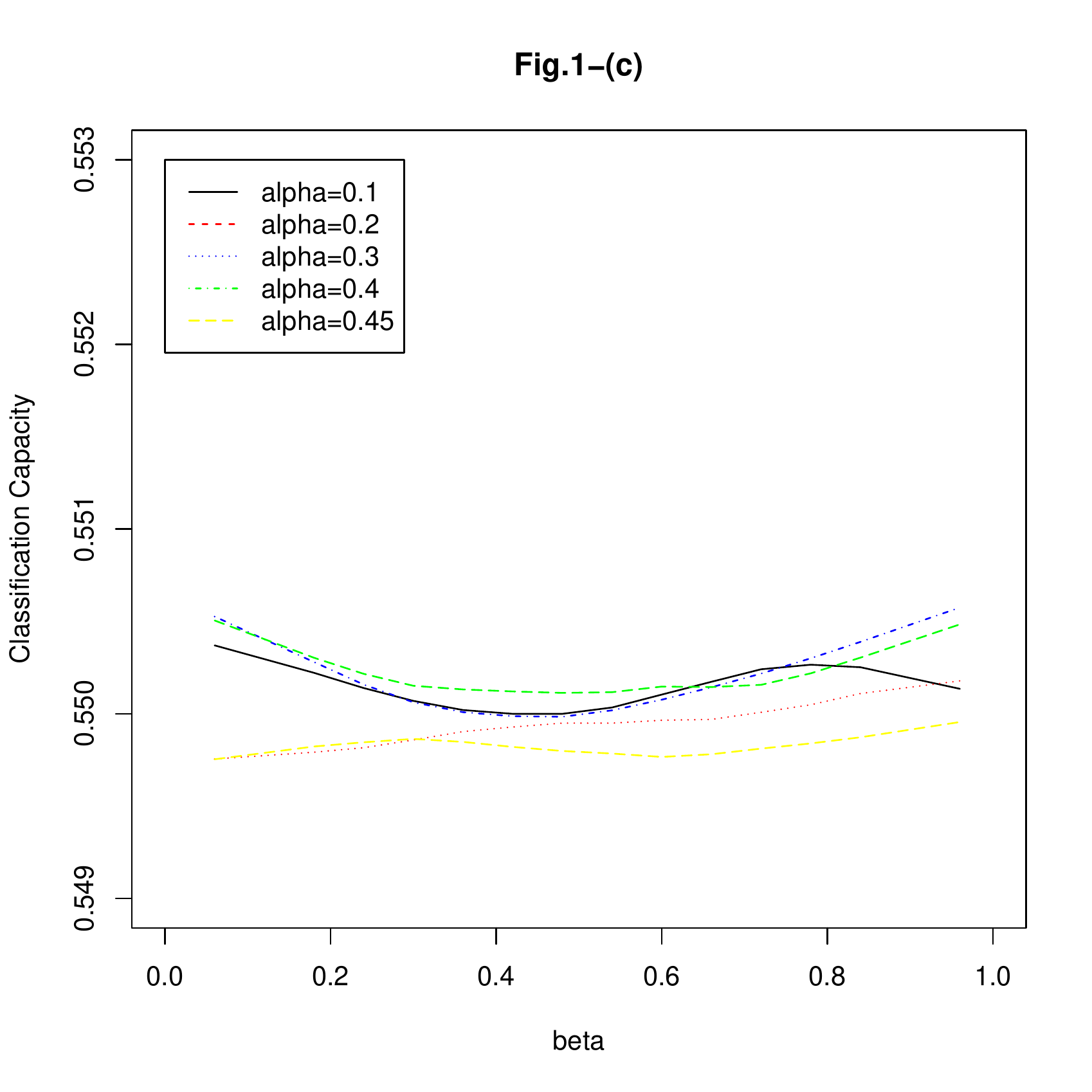}
\caption{Average of CC  over 1000 replications versus $\beta$, for different values of $\alpha$. (a) $n=100$  (b) $n=300$  (c) $n=500$.}
\end{figure}
\subsection{Chosing optimal $(\alpha,\beta)$}
We propose a method for making an optimal choice of  $(\alpha,\beta)$ based on leave-one-out cross validation used in order to minimize classification capacity. For each $k\in\{1,\cdots,n\}$, after removing the $k$-th observation for $X$ and $Y$   in the training sample  our method for selecting variable is applied on this remaining sample with a given value for $(\alpha,\beta)$ and penalty functions taken as in (\ref{penal}). Then, the observation that have been removed is allocated to a  group $\widetilde{g}_{\alpha,\beta}(k)$  in $\{1,\cdots,q\}$ by using the rule given in (\ref{fisher}) (for the two groups case) or in (\ref{classmult}) (for the case of more than two groups) based on the variables that have been selected in the previous step.  Then, we consider
\[
CV(\alpha,\beta)=\frac{1}{n}\sum_{k=1}^n\textrm{\textbf{1}}_{\{Z_k=\widetilde{g}_{\alpha,\beta}(k)\}}
\]
and we take as optimal value for $(\alpha,\beta)$ the pair $(\alpha_{opt},\beta_{opt})$ defined by:
\begin{equation}\label{opti}
(\alpha_{opt},\beta_{opt})=\underset{(\alpha,\beta)\in ]0,1/2[\times ]0,1[}{\mathrm{argmax}}CV(\alpha,\beta).
\end{equation}
\subsection{Comparison with the method of Mahat et al. (2007)}
For each of the 1000 independent replications: 
\begin{itemize}
\item[(i)] the training sample is used for selecting variables from our method and that of Mahat et al. (2007); our method is used with penalty functions given in (\ref{penal})  and  optimal  $(\alpha,\beta)$ obtained  by using leave-one-out cross validation as indicated in (\ref{opti});
\item[(ii)]the test sample is then used for computing classification capacity  for the two methods.
\end{itemize}
The average of  classification capacity over the 1000 replications is then computed. The results, given in Table 2,  do not show a superiority of one of the methods compared to the other.

\bigskip

\begin{table}
{\setlength{\tabcolsep}{0.08cm} 
\renewcommand{\arraystretch}{1} 
\begin{center}
{\begin{tabular}{ccccccccccccccc}
\hline
 &  & &  & &  & \\ 
  &  & &  & & CC & \\ 
\cline{4-7}
 $n$  & $n_{1}=n_{2}$ & & Our method & & & Mahat method\\      
\hline 
\hline
100 & 50 & & 0.60860 & & & 0.60826 \\   
200 & 100 & & 0.57810 & & & 0.57836\\  
300 & 150 & & 0.56244 & & & 0.56221 \\
400 & 200 & & 0.55534 & & & 0.55546 \\
500 & 250 & & 0.54989 & & & 0.54950\\
\hline\hline
\end{tabular}}         
\end{center}}
\centering \caption{\label{table:tab2} Average of classification capacity (CC)   over 1000 replications.}
\end{table}


\section{Proofs }
\label{proof}
 \subsection{Proof of Proposition 1}

For any fixed $m\in\{1,\cdots,M\}$, we denote by $P^{\left\{U=m\right\}}$ the  conditional probability to the event $\{U=m\}$. Then applying Theorem 2.1 of Nkiet (2012)  with the probability space $\left(\Omega,\mathcal{A},P^{\{U=m\}}\right)$, we obtain the equivalence:  $\xi_{K|m} = 0\Leftrightarrow  I_{1,m} \subset K$. Thus: 
\begin{eqnarray*}
\xi_{K} = 0 &\Leftrightarrow& \sum_{m=1}^{M}p_{m}^{2} \xi_{K|m} = 0 \\
            &\Leftrightarrow& \forall m \in \left\{1,\cdots,M\right\}, \xi_{K|m} = 0\\
  &\Leftrightarrow& \forall m \in \left\{1,\cdots,M\right\},  I_{1,m} \subset K\\
&\Leftrightarrow&\bigcup_{m=1}^{M} I_{1,m} \subset K.
\end{eqnarray*}
\subsection{Some useful results}

In this section, we give two lemmas  that are useful for proving Theorem \ref{thr1}.
\begin{Lemma}\label{lem1}
We have:

\begin{itemize}
\item[(i)] $\sqrt{n}\left(\widehat{p}_{m}^{(n)} - p_{m} \right) = \pi_{3}^{m}(\widehat{W}^{(n)})$;
\item[(ii)]
$\sqrt{n}\left(\widehat{p}_{\ell|m}^{(n)} - p_{\ell|m} \right) = p_{m}^{-1}\left(\pi_{4}^{\ell m}\left(\widehat{W}^{(n)}\right) - \pi_{3}^{m}\left(\widehat{W}^{(n)}\right)\widehat{p}_{\ell|m}^{(n)}\right)$;
\item[(iii)]
$\sqrt{n}\left(\widehat{\mu}_{m}^{(n)} - \mu_{m} \right) =  p_{m}^{-1}\left(\pi_{2}^{m}(\widehat{W}^{(n)}) - \pi_{3}^{m}(\widehat{W}^{(n)})\widehat{\mu}_{m}^{(n)}\right)$; 
\item[(iv)]
$\sqrt{n}\left(\widehat{\mu}_{\ell,m}^{(n)} - \mu_{\ell,m} \right) = p_{m}^{-1} p_{\ell\vert m}^{-1}\left(\pi_{1}^{\ell m}(\widehat{W}^{(n)}) - \pi_{4}^{\ell m}(\widehat{W}^{(n)})\widehat{\mu}_{\ell, m}^{(n)}\right)$;.
\item[(v)]
\begin{eqnarray*}
\sqrt{n}\left(\widehat{V}_{m}^{(n)} - V_{m}\right) &=& p_{m}^{-1}\left[\pi_{5}^{m}\left(\widehat{W}^{(n)}\right) - \pi_{3}^{m}\left(\widehat{W}^{(n)}\right)(\widehat{V}_{m}^{(n)} + \widehat{\mu}_{m}^{(n)}\otimes \widehat{\mu}_{m}^{(n)})\right]\\
                 &-& p_{m}^{-1}\left[\left(\pi_{2}^{m}(\widehat{W}^{(n)}) - \pi_{3}^{m}(\widehat{W}^{(n)})\widehat{\mu}_{m}^{(n)}\right)\otimes \widehat{\mu}_{m}^{(n)} \right.\\
                                                   &+& \left. \mu_{m}\otimes \left(\pi_{2}^{m}(\widehat{W}^{(n)}) - \pi_{3}^{m}(\widehat{W}^{(n)})\widehat{\mu}_{m}^{(n)}\right)\right].
\end{eqnarray*}
\end{itemize}
\end{Lemma}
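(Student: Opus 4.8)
The plan is to first unfold, block by block, what each projector produces when applied to $\widehat{W}^{(n)}$, and then to derive (i)--(v) by an elementary product--difference manipulation. Computing the empirical means and the corresponding expectations of the five components of $\mathcal{W}$, one checks directly that
\[
\pi_{3}^{m}(\widehat{W}^{(n)})=\sqrt{n}\left(\widehat{p}_{m}^{(n)}-p_{m}\right),\qquad \pi_{4}^{\ell m}(\widehat{W}^{(n)})=\sqrt{n}\left(\widehat{p}_{m}^{(n)}\widehat{p}_{\ell|m}^{(n)}-p_{m}p_{\ell|m}\right),
\]
\[
\pi_{2}^{m}(\widehat{W}^{(n)})=\sqrt{n}\left(\widehat{p}_{m}^{(n)}\widehat{\mu}_{m}^{(n)}-p_{m}\mu_{m}\right),\qquad \pi_{1}^{\ell m}(\widehat{W}^{(n)})=\sqrt{n}\left(\widehat{p}_{m}^{(n)}\widehat{p}_{\ell|m}^{(n)}\widehat{\mu}_{\ell,m}^{(n)}-p_{m}p_{\ell|m}\mu_{\ell,m}\right),
\]
\[
\pi_{5}^{m}(\widehat{W}^{(n)})=\sqrt{n}\left(\widehat{p}_{m}^{(n)}(\widehat{V}_{m}^{(n)}+\widehat{\mu}_{m}^{(n)}\otimes\widehat{\mu}_{m}^{(n)})-p_{m}(V_{m}+\mu_{m}\otimes\mu_{m})\right).
\]
The ingredients here are $\widehat{N}_{m}^{(n)}/n=\widehat{p}_{m}^{(n)}$ and $\widehat{N}_{\ell,m}^{(n)}/n=\widehat{p}_{m}^{(n)}\widehat{p}_{\ell|m}^{(n)}$, the conditional second-moment identity $\E(X\otimes X|U=m)=V_{m}+\mu_{m}\otimes\mu_{m}$, and the decomposition of the empirical covariance $\widehat{V}_{m}^{(n)}=\widehat{H}_{m}^{(n)}-\widehat{\mu}_{m}^{(n)}\otimes\widehat{\mu}_{m}^{(n)}$ with $\widehat{H}_{m}^{(n)}=(\widehat{N}_{m}^{(n)})^{-1}\sum_{i}\mathbf{1}_{\{U_{i}=m\}}X_{i}\otimes X_{i}$, which follows since $(\widehat{N}_{m}^{(n)})^{-1}\sum_{i}\mathbf{1}_{\{U_{i}=m\}}X_{i}=\widehat{\mu}_{m}^{(n)}$. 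Statement (i) is then immediate.

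Identities (ii), (iii) and (iv) all follow from the single algebraic observation that, for a scalar $a$ and an element $b$ of any vector space, $\widehat{a}\widehat{b}-ab-(\widehat{a}-a)\widehat{b}=a(\widehat{b}-b)$. Concretely, for (ii) I would take $\widehat{a}=\widehat{p}_{m}^{(n)}$ and $\widehat{b}=\widehat{p}_{\ell|m}^{(n)}$ and form $p_{m}^{-1}\left(\pi_{4}^{\ell m}(\widehat{W}^{(n)})-\pi_{3}^{m}(\widehat{W}^{(n)})\widehat{p}_{\ell|m}^{(n)}\right)$; the cross terms cancel and leave $\sqrt{n}(\widehat{p}_{\ell|m}^{(n)}-p_{\ell|m})$. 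Statement (iii) is the same argument with $\widehat{b}=\widehat{\mu}_{m}^{(n)}$, using the $\pi_{2}^{m}$ and $\pi_{3}^{m}$ expressions, and (iv) is again identical with the scalar weight $\widehat{p}_{m}^{(n)}\widehat{p}_{\ell|m}^{(n)}$ in the role of $\widehat{a}$ and $\widehat{\mu}_{\ell,m}^{(n)}$ in the role of $\widehat{b}$, drawing on the $\pi_{1}^{\ell m}$ and $\pi_{4}^{\ell m}$ expressions above.

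The substantive step is (v). I would first split $\sqrt{n}(\widehat{V}_{m}^{(n)}-V_{m})=\sqrt{n}(\widehat{H}_{m}^{(n)}-H_{m})-\sqrt{n}(\widehat{\mu}_{m}^{(n)}\otimes\widehat{\mu}_{m}^{(n)}-\mu_{m}\otimes\mu_{m})$, where $H_{m}=V_{m}+\mu_{m}\otimes\mu_{m}$. The second-moment part is handled exactly as in (ii)--(iv), with $\widehat{a}=\widehat{p}_{m}^{(n)}$ and $\widehat{b}=\widehat{H}_{m}^{(n)}$, giving $p_{m}^{-1}\left(\pi_{5}^{m}(\widehat{W}^{(n)})-\pi_{3}^{m}(\widehat{W}^{(n)})\widehat{H}_{m}^{(n)}\right)$; substituting $\widehat{H}_{m}^{(n)}=\widehat{V}_{m}^{(n)}+\widehat{\mu}_{m}^{(n)}\otimes\widehat{\mu}_{m}^{(n)}$ reproduces the first bracket of (v). For the outer-product part I would exploit bilinearity of $\otimes$ to write $\widehat{\mu}_{m}^{(n)}\otimes\widehat{\mu}_{m}^{(n)}-\mu_{m}\otimes\mu_{m}=(\widehat{\mu}_{m}^{(n)}-\mu_{m})\otimes\widehat{\mu}_{m}^{(n)}+\mu_{m}\otimes(\widehat{\mu}_{m}^{(n)}-\mu_{m})$ and then insert the formula for $\sqrt{n}(\widehat{\mu}_{m}^{(n)}-\mu_{m})$ from (iii) into each factor, which yields precisely the second bracket of (v). The main obstacle is purely bookkeeping: keeping the hatted and unhatted means in the correct slots of the tensor products and recombining the second-moment term into the $V_{m}+\mu_{m}\otimes\mu_{m}$ form, so that the resulting expression matches the stated one verbatim.
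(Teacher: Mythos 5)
Your proposal is correct and follows essentially the same route as the paper: both unfold each projector applied to $\widehat{W}^{(n)}$ as $\sqrt{n}$ times a difference of empirical and population products (using $\widehat{N}_m^{(n)}/n=\widehat{p}_m^{(n)}$, $\widehat{N}_{\ell,m}^{(n)}/n=\widehat{p}_m^{(n)}\widehat{p}_{\ell|m}^{(n)}$ and $\E(X\otimes X\,|\,U=m)=V_m+\mu_m\otimes\mu_m$), and then apply the product--difference decomposition $\widehat{a}\widehat{b}-ab=(\widehat{a}-a)\widehat{b}+a(\widehat{b}-b)$, together with bilinearity of $\otimes$ for item (v), to solve for each $\sqrt{n}(\widehat{b}-b)$. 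The only differences are presentational (you state the algebraic identity once and reuse it, and you split $\widehat{V}_m^{(n)}-V_m$ into second-moment and outer-product parts rather than expanding $\pi_5^m(\widehat{W}^{(n)})$ wholesale and solving), so the content matches the paper's proof.
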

\begin{proof}

\noindent $ (i)$. Clearly,
\begin{eqnarray*}
 \pi_{3}^{m}\left(\widehat{W}^{(n)}\right)=\sqrt{n}\left(\frac{1}{n}\sum_{i=1}^{n}\textrm{\textbf{1}}_{\left\{U_i=m\right\}} - \E(\textrm{\textbf{1}}_{\left\{U=m\right\}})\right)
=\sqrt{n}\left(\widehat{p}_{m}^{(n)} - p_{m}\right).
\end{eqnarray*}

\noindent $(ii)$.  
\begin{eqnarray*}
 \pi_{4}^{\ell m}\left(\widehat{W}^{(n)}\right)&=&\sqrt{n}\left(\frac{1}{n}\sum_{i=1}^{n}\textrm{\textbf{1}}_{\left\{Z_i=\ell,\,U_i=m\right\}} - \E(\textrm{\textbf{1}}_{\left\{Z=\ell,\,U=m\right\}})\right)\\
&=&\sqrt{n}\left(\frac{\widehat{N}^{(n)}_{\ell,m}}{n}- P(Z=\ell,\,U=m)\right)\\
&=&\sqrt{n} \left(\widehat{p}_{m}^{(n)}\widehat{p}_{\ell|m}^{(n)} - p_{m}p_{\ell|m}\right).
\end{eqnarray*}
Further, 
\begin{eqnarray*}
\sqrt{n}\left(\widehat{p}_{m}^{(n)}\widehat{p}_{\ell|m}^{(n)} - p_{m}p_{\ell|m}\right) &=& \sqrt{n}\left(\widehat{p}_{m}^{(n)} - p_{m}\right)\widehat{p}_{\ell|m}^{(n)} + p_{m}\sqrt{n}\left(\widehat{p}_{\ell|m}^{(n)} - p_{\ell|m} \right)\\
&=&  \pi_{3}^{m}\left(\widehat{W}^{(n)}\right)\widehat{p}_{\ell|m}^{(n)} +p_{m}\sqrt{n}\left(\widehat{p}_{\ell|m}^{(n)} - p_{\ell|m} \right).
\end{eqnarray*}
Hence
\[
\sqrt{n}\left(\widehat{p}_{\ell|m}^{(n)} - p_{\ell|m}\right) = p_{m}^{-1}\left(\pi_{4}^{\ell m}\left(\widehat{W}^{(n)}\right) - \pi_{3}^{m}\left(\widehat{W}^{(n)}\right)\widehat{p}_{\ell|m}^{(n)}\right).
\]
$(iii)$.
\begin{eqnarray*}
 \pi_{2}^{ m}\left(\widehat{W}^{(n)}\right)&=&\sqrt{n}\left(\frac{1}{n}\sum_{i=1}^{n}\textrm{\textbf{1}}_{\left\{U_i=m\right\}}X_i - \E(\textrm{\textbf{1}}_{\left\{U=m\right\}}X)\right)= \sqrt{n}\left(\widehat{p}_{m}^{(n)}\widehat{\mu}_{m}^{(n)} - p_{m}\mu_{m}\right)\\
&=& \sqrt{n} \left( \widehat{p}_{m}^{(n)}- p_{m}\right) \widehat{\mu}_{m}^{(n)} + p_{m}\sqrt{n}\left(\widehat{\mu}_{m}^{(n)} - \mu_{m} \right)\\
&=& \pi_{3}^{m}\left(\widehat{W}^{(n)}\right)\widehat{\mu}_{m}^{(n)} + p_{m}\sqrt{n}\left(\widehat{\mu}_{m}^{(n)} - \mu_{m}) \right).
\end{eqnarray*}
Hence
\[
\sqrt{n}(\widehat{\mu}_{m}^{(n)} - \mu_{m}) = p_{m}^{-1}\left(\pi_{2}^{m}\left(\widehat{W}^{(n)}\right) - \pi_{3}^{m}\left(\widehat{W}^{(n)}\right)\widehat{\mu}_{m}^{(n)}\right).
\]
$(iv)$.
\begin{eqnarray*}
 \pi_{1}^{ \ell m}\left(\widehat{W}^{(n)}\right)&=&\sqrt{n}\left(\frac{1}{n}\sum_{i=1}^{n}\textrm{\textbf{1}}_{\left\{Z_i=\ell,\,U_i=m\right\}}X_i - \E(\textrm{\textbf{1}}_{\left\{Z=\ell,\,U=m\right\}}X)\right)\\
&=& \sqrt{n}\left(\widehat{p}_{\ell,m}^{(n)}\widehat{\mu}_{\ell,m}^{(n)} - p_{\ell,m}\mu_{\ell,m}\right),
\end{eqnarray*}
where 
\begin{equation}\label{plm}
\widehat{p}_{\ell,m}^{(n)}=\frac{\widehat{N}_{\ell,m}^{(n)}}{n}= \widehat{p}_{m}^{(n)}\widehat{p}_{\ell|m}^{(n)}\,\,\textrm{ and }p_{\ell,m}=P(Z=\ell,U=m)=p_mp_{\ell\vert m}.
\end{equation}
 Moreover,
\begin{eqnarray*}
\sqrt{n}\left(\widehat{p}_{\ell,m}^{(n)}\widehat{\mu}_{\ell,m}^{(n)} - p_{\ell,m}\mu_{\ell,m}\right)&=& \sqrt{n} \left( \widehat{p}_{\ell,m}^{(n)} - p_{\ell,m}\right)\widehat{\mu}_{\ell,m}^{(n)} + p_{\ell,m}\sqrt{n}\left(\widehat{\mu}_{\ell,m}^{(n)} - \mu_{\ell,m}\right)\\
&=&\pi_{4}^{ \ell m}\left(\widehat{W}^{(n)}\right)\widehat{\mu}_{\ell,m}^{(n)} + p_{\ell,m}\sqrt{n}\left(\widehat{\mu}_{\ell,m}^{(n)} - \mu_{\ell,m}\right).
\end{eqnarray*}
From this last equality and the second one in (\ref{plm}), we deduce that
\[
\sqrt{n}(\widehat{\mu}_{\ell,m}^{(n)} - \mu_{\ell,m}) = p_{m}^{-1} p_{\ell\vert m}^{-1}\left(\pi_{1}^{\ell m}\left(\widehat{W}^{(n)}\right) - \pi_{3}^{\ell m}\left(\widehat{W}^{(n)}\right)\widehat{\mu}_{\ell,m}^{(n)}\right).
\]
$(v)$.
\begin{eqnarray*}
 \pi_{5}^{ m}\left(\widehat{W}^{(n)}\right)&=&\sqrt{n}\left(\frac{1}{n}\sum_{i=1}^{n}\textrm{\textbf{1}}_{\left\{U_i=m\right\}}X_i\otimes X_i - \E(\textrm{\textbf{1}}_{\left\{U=m\right\}}X\otimes X)\right)\\
 &=& \sqrt{n}\left(\widehat{p}_{m}^{(n)}\frac{1}{\widehat{N}_{m}^{(n)}}\sum_{i=1}^{n}\textrm{\textbf{1}}_{\left\{U_i=m\right\}} X_{i}\otimes X_{i} - p_{m}\E(X\otimes X|U=m)\right)\\
 &=& \sqrt{n}\left(\widehat{p}_{m}^{(n)}\widehat{V}_{m}^{(n)} + \widehat{p}_{m}^{(n)}\widehat{\mu}_{m}^{(n)}\otimes \widehat{\mu}_{m}^{(n)} - p_{m}V_{m} - p_{m}\,\mu_{m}\otimes \mu_{m}\right)\\
                                          &=& p_{m}\sqrt{n}(\widehat{V}_{m}^{(n)} - V_{m}) + \sqrt{n}(\widehat{p}_{m}^{(n)} - p_{m})\left(\widehat{V}_{m}^{(n)} + \widehat{\mu}_{m}^{(n)}\otimes \widehat{\mu}_{m}^{(n)} \right)\\
                                          &+& p_{m}\left(\sqrt{n}(\widehat{\mu}_{m}^{(n)} - \mu_{m})\otimes \widehat{\mu}_{m}^{(n)} + \mu_{m}\otimes \sqrt{n}(\widehat{\mu}_{m}^{(n)} - \mu_{m}) \right)\\
 &=& p_{m}\sqrt{n}(\widehat{V}_{m}^{(n)} - V_{m}) + \pi_{3}^{ m}\left(\widehat{W}^{(n)}\right)\left(\widehat{V}_{m}^{(n)} + \widehat{\mu}_{m}^{(n)}\otimes \widehat{\mu}_{m}^{(n)} \right)\\
                                          &+& \left(\pi_{2}^{m}\left(\widehat{W}^{(n)}\right) - \pi_{3}^{m}\left(\widehat{W}^{(n)}\right)\widehat{\mu}_{m}^{(n)}\right)\otimes \widehat{\mu}_{m}^{(n)} \\
&+ & \mu_{m}\otimes \left(\pi_{2}^{m}\left(\widehat{W}^{(n)}\right) - \pi_{3}^{m}\left(\widehat{W}^{(n)}\right)\widehat{\mu}_{m}^{(n)}\right)
\end{eqnarray*}
Thus
\begin{eqnarray*}
\sqrt{n}\left(\widehat{V}_{m}^{(n)} - V_{m}\right)     
                                                    &=& p_{m}^{-1}\left[\pi_{5}^{m}\left(\widehat{W}^{(n)}\right) - \pi_{3}^{m}\left(\widehat{W}^{(n)}\right)(\widehat{V}_{m}^{(n)} + \widehat{\mu}_{m}^{(n)}\otimes \widehat{\mu}_{m}^{(n)})\right]\\
                                                   &-& p_{m}^{-1}\left[\left(\pi_{2}^{m}(\widehat{W}^{(n)}) - \pi_{3}^{m}(\widehat{W}^{(n)})\widehat{\mu}_{m}^{(n)}\right)\otimes \widehat{\mu}_{m}^{(n)}\right. \\
                                                   &+& \left. \mu_{m}\otimes \left(\pi_{2}^{m}(\widehat{W}^{(n)}) - \pi_{3}^{m}(\widehat{W}^{(n)})\widehat{\mu}_{m}^{(n)}\right)\right].
\end{eqnarray*}
\end{proof}


\begin{Lemma}\label{lem2}
For any  $K \subset I$ and any  $m \in \left\{1,\cdots,M\right\}$:
\begin{enumerate}
	\item[(i)] $\widehat{\xi}_{K|m}^{(n)}$ converges almost surely to  $\xi_{K|m}$ as $n \rightarrow +\infty$;
	\item[(ii)] $n \widehat{\xi}_{K|m}^{(n)}$ = $\sum_{\ell=1}^{q} \left(\widehat{\Phi}_{\ell,K\vert m}^{(n)}(\widehat{W}^{(n)}) + p_{\ell|m}\|\widehat{\Psi}_{\ell,K\vert m}^{(n)}(\widehat{W}^{(n)}) + \sqrt{n}\Delta_{\ell,K\vert m}\|  \right)^{2}$.
\end{enumerate}
\end{Lemma}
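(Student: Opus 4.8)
The two assertions call for different tools: (i) is a continuity statement and (ii) an exact algebraic rearrangement. For (i) I would combine the strong law of large numbers with the continuous mapping theorem. Each of $\widehat{p}^{(n)}_m$, $\widehat{p}^{(n)}_{\ell|m}$, $\widehat{\mu}^{(n)}_m$, $\widehat{\mu}^{(n)}_{\ell,m}$ and $\widehat{V}^{(n)}_m$ is an empirical mean, or a ratio of empirical means, of i.i.d.\ terms with finite expectation --- for $\widehat{V}^{(n)}_m$ this uses the standing hypothesis $\mathbb{E}(\Vert X\Vert^4)<+\infty$ --- so each converges almost surely to its population counterpart. Because $V_m$ is invertible, $A_KV_mA^{*}_K$ is invertible as well, so on a neighbourhood of the limit the map sending these five quantities to $\widehat{\xi}^{(n)}_{K|m}$ (matrix inversion, composition of operators, squared Euclidean norm) is continuous; hence $\widehat{\xi}^{(n)}_{K|m}\to\xi_{K|m}$ almost surely.

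For (ii), set $\widehat{\Delta}^{(n)}_{\ell,K|m}=(I_{\R^p}-\widehat{V}^{(n)}_m\widehat{Q}^{(n)}_{K|m})(\widehat{\mu}^{(n)}_{\ell,m}-\widehat{\mu}^{(n)}_m)$, the empirical analogue of $\Delta_{\ell,K|m}$, so that by the very definition of $\widehat{\xi}^{(n)}_{K|m}$ one has $n\,\widehat{\xi}^{(n)}_{K|m}=\sum_{\ell=1}^q(\sqrt{n}\,\widehat{p}^{(n)}_{\ell|m}\,\Vert\widehat{\Delta}^{(n)}_{\ell,K|m}\Vert)^2$. I would split the scalar inside each square as
\[
\sqrt{n}\,\widehat{p}^{(n)}_{\ell|m}\,\Vert\widehat{\Delta}^{(n)}_{\ell,K|m}\Vert=\sqrt{n}\,(\widehat{p}^{(n)}_{\ell|m}-p_{\ell|m})\,\Vert\widehat{\Delta}^{(n)}_{\ell,K|m}\Vert+p_{\ell|m}\,\Vert\sqrt{n}\,\widehat{\Delta}^{(n)}_{\ell,K|m}\Vert.
\]
By Lemma 1(ii) the first summand is exactly $\widehat{\Phi}^{(n)}_{\ell,K|m}(\widehat{W}^{(n)})$. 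Consequently, squaring and summing over $\ell$, the whole statement reduces to the single identity $\widehat{\Psi}^{(n)}_{\ell,K|m}(\widehat{W}^{(n)})=\sqrt{n}\,(\widehat{\Delta}^{(n)}_{\ell,K|m}-\Delta_{\ell,K|m})$, because this gives $\Vert\sqrt{n}\,\widehat{\Delta}^{(n)}_{\ell,K|m}\Vert=\Vert\widehat{\Psi}^{(n)}_{\ell,K|m}(\widehat{W}^{(n)})+\sqrt{n}\,\Delta_{\ell,K|m}\Vert$.

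To establish this reduced identity I would expand $\widehat{\Delta}^{(n)}_{\ell,K|m}-\Delta_{\ell,K|m}=\widehat{P}(\widehat{\nu}-\nu)+(\widehat{P}-P)\nu$, writing $P=I_{\R^p}-V_mQ_{K|m}$, $\widehat{P}=I_{\R^p}-\widehat{V}^{(n)}_m\widehat{Q}^{(n)}_{K|m}$ and $\nu=\mu_{\ell,m}-\mu_m$ (with $\widehat{\nu}$ its hatted analogue). Into $\sqrt{n}(\widehat{\nu}-\nu)$ I substitute Lemma 1(iii)--(iv), which yields the two mean-difference groups of terms of $\widehat{\Psi}^{(n)}_{\ell,K|m}$; and I rewrite the operator difference via $\widehat{P}-P=V_mQ_{K|m}-\widehat{V}^{(n)}_m\widehat{Q}^{(n)}_{K|m}$ together with the resolvent identity $\widehat{Q}^{(n)}_{K|m}-Q_{K|m}=-\widehat{Q}^{(n)}_{K|m}(\widehat{V}^{(n)}_m-V_m)Q_{K|m}$ and Lemma 1(v), which converts $\sqrt{n}(\widehat{P}-P)\nu$ into the three $\widehat{V}^{(n)}_m$-dependent groups. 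Expressing every scaled difference through the projectors $\pi_1^{\ell m},\pi_2^m,\pi_3^m,\pi_4^{\ell m},\pi_5^m$ acting on $\widehat{W}^{(n)}$ then identifies the sum with $\widehat{\Psi}^{(n)}_{\ell,K|m}(\widehat{W}^{(n)})$, the empirical analogue of $\Psi_{\ell,K|m}$.

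The main obstacle is precisely this last matching: the identity must hold exactly for each finite $n$, not merely to first order, so one has to keep scrupulous track of which factors are hatted (empirical) and which are population values, and to use the projection relations $Q_{K|m}V_mQ_{K|m}=Q_{K|m}$ and $\widehat{Q}^{(n)}_{K|m}\widehat{V}^{(n)}_m\widehat{Q}^{(n)}_{K|m}=\widehat{Q}^{(n)}_{K|m}$ (equivalently, that $V_mQ_{K|m}$ and $\widehat{V}^{(n)}_m\widehat{Q}^{(n)}_{K|m}$ are idempotent) to make the operator products collapse term by term. The pieces that are quadratic in the estimation errors are exactly what separate $\widehat{\Psi}^{(n)}_{\ell,K|m}$ from its limit $\Psi_{\ell,K|m}$; they are harmless for the present lemma, which asserts only the algebraic identity, and they vanish in the almost sure uniform convergence $\widehat{\Psi}^{(n)}_{\ell,K|m}\to\Psi_{\ell,K|m}$ invoked elsewhere in the theorem.
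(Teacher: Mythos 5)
Your proposal is correct and follows essentially the same route as the paper's proof: part (i) via the SLLN plus continuity of the map defining $\widehat{\xi}^{(n)}_{K|m}$, and part (ii) by introducing $\widehat{\Delta}^{(n)}_{\ell,K|m}$, splitting $\sqrt{n}\,\widehat{p}^{(n)}_{\ell|m}\Vert\widehat{\Delta}^{(n)}_{\ell,K|m}\Vert$ around $p_{\ell|m}$ so that Lemma \ref{lem1}(ii) yields the $\widehat{\Phi}^{(n)}_{\ell,K|m}(\widehat{W}^{(n)})$ term, and then linearizing $\sqrt{n}\bigl(\widehat{\Delta}^{(n)}_{\ell,K|m}-\Delta_{\ell,K|m}\bigr)$ exactly through Lemma \ref{lem1}(iii)--(v). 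The only deviation is in the bookkeeping of the linearization: you factor out the hatted projector $I_{\R^{p}}-\widehat{V}^{(n)}_{m}\widehat{Q}^{(n)}_{K|m}$ and invoke the resolvent identity, whereas the paper factors out the population projector $I_{\R^{p}}-V_{m}Q_{K|m}$ and keeps hatted quantities inside (using the mixed relations $Q_{K|m}V_{m}\widehat{Q}^{(n)}_{K|m}=\widehat{Q}^{(n)}_{K|m}$ and $Q_{K|m}\widehat{V}^{(n)}_{m}\widehat{Q}^{(n)}_{K|m}=Q_{K|m}$); this produces a version of $\widehat{\Psi}^{(n)}_{\ell,K|m}$ that differs from the paper's for finite $n$ but is equally exact and has the same almost sure uniform limit $\Psi_{\ell,K|m}$, which is all that the lemma and Theorem \ref{thr1} require.
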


\begin{proof}

\noindent $(i)$. First, using the strong law of large numbers (SLLN) we obtain the almost sure convergence of $\widehat{N}^{(n)}_m/n$ (resp. $\widehat{N}^{(n)}_{\ell ,m}/n$)  to $p_m$ (resp. $p_{\ell ,m}=p_mp_{\ell\vert m}$) as $n\rightarrow +\infty$. Then, as $n\rightarrow +\infty$, $\widehat{p}^{(n)}_{\ell\vert m}$ converges almost surely (a.s.)  to $p_{\ell\vert m}$ and, by the preceding convergence properties and the SLLN,
\[
\widehat{\mu}_{m}^{(n)} = \left(\frac{\widehat{N}_{ m}^{(n)}}{n}\right)^{-1}\left(\frac{1}{n}\sum_{i=1}^{n} \textrm{\textbf{1}}_{\left\{U_i=m\right\}}X_{i} \right)\stackrel{a.s.}{\longrightarrow}p_m^{-1}\mathbb{E}\left(\textrm{\textbf{1}}_{\left\{U=m\right\}}X\right)=\mu_m,
\]
\[
\widehat{\mu}_{\ell,m}^{(n)} = \left(\frac{\widehat{N}_{\ell , m}^{(n)}}{n}\right)^{-1}\left(\frac{1}{n}\sum_{i=1}^{n} \textrm{\textbf{1}}_{\left\{Z_{i}=\ell,U_i=m\right\}}X_{i} \right)\stackrel{a.s.}{\longrightarrow}p_{\ell ,m}^{-1}\mathbb{E}\left(\textrm{\textbf{1}}_{\left\{Z=\ell ,U=m\right\}}X\right)=\mu_{\ell , m}
\]
and
\begin{eqnarray*}
\widehat{V}^{(n)}_m&=& \left(\frac{\widehat{N}_{ m}^{(n)}}{n}\right)^{-1}\left(\frac{1}{n}\sum_{i=1}^{n} \textrm{\textbf{1}}_{\left\{U_i=m\right\}}X_{i}\otimes X_i\right)-\widehat{\mu}^{(n)}\otimes \widehat{\mu}^{(n)}\\
& &\stackrel{a.s.}{\longrightarrow}p_m^{-1}\mathbb{E}\left(\textrm{\textbf{1}}_{\left\{U=m\right\}}X\otimes X\right)-\mu\otimes\mu=V_m.
\end{eqnarray*}
Hence, $\widehat{Q}^{(n)}_{K\vert m}$  converges almost surely  to $Q_{K\vert m}$  as $n\rightarrow +\infty$  and, finally, we obtain from all these results the required almost sure convergence of $\xi^{(n)}_{K\vert m}$ to $\xi_{K\vert m}$ as $n\rightarrow +\infty$.

\noindent $(ii)$. Putting
$\widehat{\Delta}^{(n)}_{\ell,K\vert m} = \left(I_{\R^{p}} - \widehat{V}^{(n)}_{m} \widehat{Q}^{(n)}_{K\vert m}\right)\left(\widehat{\mu}^{(n)}_{\ell,m} - \widehat{\mu}^{(n)}_{m}\right)$,  
we have
\begin{eqnarray*}
\sqrt{n}\widehat{\Delta}^{(n)}_{\ell,K\vert m} &=& \left(I_{\R^{p}} - V_{m}Q_{K\vert m}\right)\left[\sqrt{n}\left(\widehat{\mu}^{(n)}_{\ell,m} - \mu_{\ell,m}\right) - \sqrt{n}\left(\widehat{\mu}^{(n)}_{m} - \mu_{m}\right) \right.\\
&-& \left. \left(\sqrt{n}\left(\widehat{V}^{(n)}_{m} - V_{m}\right)\right)\widehat{Q}^{(n)}_{K\vert m}\left(\widehat{\mu}^{(n)}_{\ell,m} - \widehat{\mu}^{(n)}_{m}\right)\right]
+ \sqrt{n}\Delta_{\ell,K\vert m}.
\end{eqnarray*}
Then, using Lemma \ref{lem1},  we obtain $\sqrt{n}\widehat{\Delta}^{(n)}_{\ell,K\vert m} =\widehat{\Psi}_{\ell,K\vert m}^{(n)}\left(\widehat{W}^{(n)}\right) + \sqrt{n}\Delta_{\ell,K\vert m}$, where $\widehat{\Psi}_{\ell,K\vert m}^{(n)}$ is the random operator defined on $\mathcal{E}$ by
\begin{eqnarray*}
\widehat{\Psi}_{\ell,K|m}\left(T\right) &=&p_m^{-1} \left(I_{\R^{p}} - V_{m}Q_{K\vert m}\right)
 \left[p_{\ell|m}^{-1}\left(\pi_{1}^{\ell m}(T) - \pi_{4}^{\ell m}(T)\widehat{\mu}_{\ell,m}^{(n)}\right)\right.
 - \pi_{2}^{m}(T) + \pi_{3}^{m}(T)\widehat{\mu}_{m}^{(n)}\\
& & -\left(\pi_{5}^{m}(T) - \pi_{3}^{m}(T)\left(\widehat{V}^{(n)}_{m} + \widehat{\mu}_{m}^{(n)}\otimes \widehat{\mu}_{m}^{(n)}\right)\right)\widehat{Q}^{(n)}_{K\vert m}\left(\mu^{(n)}_{\ell,m} - \widehat{\mu}_{m}^{(n)}\right)\\
& & +\left(\left(\pi_{2}^{m}(T) - \pi_{3}^{m}(T) \widehat{\mu}_{m}^{(n)}\right)\otimes\widehat{\mu}_{m}^{(n)} \right) \widehat{Q}^{(n)}_{K\vert m}\left(\mu^{(n)}_{\ell,m} - \widehat{\mu}_{m}^{(n)}\right)\\
& &+ \left.\left(\mu_{m}\otimes \left(\pi_{2}^{m}(T) - \pi_{3}^{m}(T)\widehat{\mu}_{m}^{(n)}\right)\right)\widehat{Q}^{(n)}_{K\vert m}\left(\mu^{(n)}_{\ell,m} - \widehat{\mu}_{m}^{(n)}\right) \right].
\end{eqnarray*}
Since
\begin{eqnarray*}
n\widehat{\xi}^{(n)}_{K|m} &=& 
\sum_{\ell=1}^{q}\left(\sqrt{n}\left(\widehat{p}^{(n)}_{\ell|m} - p_{\ell|m}\right)\|\widehat{\Delta}^{(n)}_{\ell,K\vert m} 
\|_{\R^{p}} + p_{\ell|m}\|\sqrt{n}\widehat{\Delta}^{(n)}_{\ell,K,m}\|\right)^{2},
\end{eqnarray*}
we deduce from what precedes and Lemma \ref{lem1} that 
\begin{eqnarray*}
n\widehat{\xi}^{(n)}_{K|m}
                           &=& \sum_{\ell=1}^{q}\left(\widehat{\Phi}_{\ell,K,m}^{(n)}\left(\widehat{W}^{(n)}\right) + p_{\ell|m}\|\widehat{\Psi}_{\ell,K,m}^{(n)}\left(\widehat{W}^{(n)}\right) + \sqrt{n}\Delta_{\ell,K\vert m}\| \right)^{2}
\end{eqnarray*}
where $\widehat{\Phi}_{\ell,K\vert m}^{(n)}$ is the random operator defined on $\mathcal{E}$ by
\[
\widehat{\Phi}_{\ell,K,m}^{(n)}\left(T\right) = p_{m}^{-1}\left(\pi_{4}^{\ell m}\left(T\right) - \pi_{3}^{m}\left(T\right)\widehat{p}_{\ell|m}^{(n)}\right)\|\widehat{\Delta}^{(n)}_{\ell,K\vert m}.
\|
\]
The almost sure convergences obtained in the proof of $(i)$ imply that  $\widehat{\Phi}^{(n)}_{\ell,K\vert m}$ (resp.$\widehat{\Psi}^{(n)}_{\ell,K\vert m}$) converges almost surely uniformly to $\Phi_{\ell,K\vert m}$ (resp.$\Psi_{\ell,K\vert m}$) as $n \rightarrow +\infty$. 
\end{proof}
\subsection{Proof of Theorem \ref{thr1}}
Using  Assertion $(i)$ of Lemma \ref{lem2} we obviously obtain $(i)$. For proving $(ii)$, note that
	$$\left(\widehat{p}_{m}^{(n)}\right)^{2}\widehat{\xi}_{K|m}^{(n)}=\left(\widehat{p}_{m}^{(n)} - p_{m}\right)\left(\widehat{p}_{m}^{(n)} + p_{m}\right)\widehat{\xi}_{K|m}^{(n)} + p_{m}^{2}\widehat{\xi}_{K|m}^{(n)}.$$ Then, using Lemma \ref{lem1} and Lemma \ref{lem2}, we obtain
\begin{eqnarray*}
n\left(\widehat{p}_{m}^{(n)}\right)^{2}\widehat{\xi}_{K|m}^{(n)}
                                                     = \sqrt{n}\widehat{\Lambda}_{K\vert m}^{(n)}(\widehat{W}^{(n)}) &+& \sum_{\ell=1}^{q}\left(p_{m}\widehat{\Phi}_{\ell,K\vert m}^{(n)}\left(\widehat{W}^{(n)}\right) \right.\\
& &\left.+
 p_{m}p_{\ell|m}\|\widehat{\Psi}_{\ell,K\vert m}^{(n)}\left(\widehat{W}^{(n)}\right) \right.
                                                     +\left. \sqrt{n}\Delta_{\ell,K\vert m}\| \right)^{2},
\end{eqnarray*}
where $\widehat{\Lambda}_{K\vert m}^{(n)}$ is the random operator defined on $\mathcal{E}$  by  
\[
\widehat{\Lambda}_{K\vert m}^{(n)}(T) = \pi_{3}^{m}(T)\left(\widehat{p}_{m}^{(n)} + p_{m} \right)\widehat{\xi}^{(n)}_{K|m}. 
\]
Then from (\ref{estimcrit}), it follows:
\begin{eqnarray*}
  n \widehat{\xi}_{K}^{(n)} &=& \sum_{m=1}^{M}\sqrt{n}\widehat{\Lambda}_{K|m}^{(n)}(\widehat{W}^{(n)}) + \sum_{m=1}^{M}\sum_{\ell=1}^{q}\left(p_{m}\widehat{\Phi}_{\ell,K|m}^{(n)}\left(\widehat{W}^{(n)}\right)  \right.\\
                            &+& \left.
p_{m}p_{\ell|m}\|\widehat{\Psi}_{\ell,K|m}^{(n)}\left(\widehat{W}^{(n)}\right) + \sqrt{n}\Delta_{\ell,K\vert m}\| \right)^{2}.
\end{eqnarray*}
The almost sure convergence of  $\widehat{p}_{m}^{(n)}$  to $p_{m}$,  as $n \rightarrow +\infty$, and Assertion $(i)$ of Lemma \ref{lem2} imply that $\widehat{\Lambda}_{K|m}^{(n)}$ converges almost surely uniformly to  $\Lambda_{K|m}$, as $n\rightarrow +\infty$.
\subsection{Proof of  Proposition \ref{prop2}}
\label{thcons}
From  Theorem \ref{thr1}, we obtain:
\begin{equation}\label{decompo}
n^{\alpha}\left(\widehat{\xi}^{(n)}_{K_{\sigma(i)}} - \widehat{\xi}^{(n)}_{K_{\sigma(i+1)}}\right) =\widehat{A}^{(n)}_{i}+\widehat{B}^{(n)}_{i}+\widehat{C}^{(n)}_{i}+\widehat{D}^{(n)}_{i} 
\end{equation}
where
\begin{eqnarray*}
\widehat{A}^{(n)}_{i} &=& \sum_{m=1}^{M}n^{\alpha -1/2}\left(\widehat{\Lambda}^{(n)}_{K_{\sigma(i)}\vert m}(\widehat{W}^{(n)}) - \widehat{\Lambda}^{(n)}_{K_{\sigma(i+1)}\vert m}(\widehat{W}^{(n)})\right),\\
\widehat{B}^{(n)}_{i} &=& \sum_{m=1}^{M}\sum_{\ell=1}^{q}p_{m}^{2}n^{\alpha -1}\left(\widehat{\Phi}^{(n)}_{\ell,K_{\sigma(i)}\vert m}(\widehat{W}^{(n)})^{2} - \widehat{\Phi}^{(n)}_{\ell,K_{\sigma(i+1)}\vert m}(\widehat{W}^{(n)})^{2} \right),\\
\widehat{C}^{(n)}_{i} &=& \sum_{m=1}^{M}\sum_{\ell=1}^{q}p_{m}^{2}p_{\ell|m}^{2}n^{\alpha -1} \left(\|\widehat{\Psi}^{(n)}_{\ell,K_{\sigma(i)}\vert m}(\widehat{W}^{(n)}) + \sqrt{n}\Delta_{\ell,K_{\sigma(i)}\vert m}\|^{2}\right.\\
& &-\left. \|\widehat{\Psi}^{(n)}_{\ell,K_{\sigma(i+1)}\vert m}(\widehat{W}^{(n)}) 
+ \sqrt{n}\Delta_{\ell,K_{\sigma(i+1)}\vert m}\|^{2}\right), \\
\widehat{D}^{(n)}_{i} &=& \sum_{m=1}^{M}\sum_{\ell=1}^{q}2p_{m}^{2}p_{\ell|m} n^{\alpha -1}\left(\widehat{\Phi}^{(n)}_{\ell,K_{\sigma(i)}\vert m}(\widehat{W}^{(n)})\|\widehat{\Psi}^{(n)}_{\ell,K_{\sigma(i)}\vert m}(\widehat{W}^{(n)}) + \sqrt{n}\Delta_{\ell,K_{\sigma(i)}\vert m}\| \right.\\
& &- \left. \widehat{\Phi}^{(n)}_{\ell,K_{\sigma(i+1)}\vert m}(\widehat{W}^{(n)})\|\widehat{\Psi}^{(n)}_{\ell,K_{\sigma(i+1)}\vert m}(\widehat{W}^{(n)}) + \sqrt{n}\Delta_{\ell,K_{\sigma(i+1)}\vert m}\|\right).
\end{eqnarray*}
Denoting by  $\|.\|_{\infty}$ the usual uniform convergence norm defined by$\|T\|_{\infty} = \sup_{x\neq0} \|Tx\|/\|x\|$, and by $\Vert\cdot\Vert_\mathcal{E}$ the norm of $\mathcal{E}$, we have
\begin{equation}\label{ai}
|\widehat{A}^{(n)}_{i}| 
                    \leq n^{\alpha -1/2} \left(\|\widehat{\Lambda}^{(n)}_{K_{\sigma(i)}\vert m}\|_{\infty} + \|\widehat{\Lambda}^{(n)}_{K_{\sigma(i+1)}\vert m}\|_{\infty}\right)\|\widehat{W}^{(n)}\|_\mathcal{E},
\end{equation}
\begin{equation}\label{bi}
\vert\widehat{B}^{(n)}_{i} \vert
                   \leq n^{\alpha -1}\left(\|\widehat{\Phi}^{(n)}_{\ell,K_{\sigma(i)}\vert m}\|_{\infty}^{2} + \|\widehat{\Phi}^{(n)}_{\ell,K_{\sigma(i+1)}\vert m}\|_{\infty}^{2}\right)\|\widehat{W}^{(n)}\|^{2}_\mathcal{E},
\end{equation}
\begin{eqnarray}\label{di}
|\widehat{D}_{i}^{(n)}| 
                       &\leq&
n^{\alpha-1/2}\left\|\widehat{W}^{(n)}\right\|_\mathcal{E}\left(\left\|\widehat{\Phi}^{(n)}_{\ell,K_{\sigma(i)}\vert m}\right\|_{\infty}\left(n^{-1/2}\|\widehat{\Psi}^{(n)}_{\ell,K_{\sigma(i)}\vert m}\|_{\infty}\|\widehat{W}^{(n)}\|_\mathcal{E} + \|\Delta_{\ell,K_{\sigma(i)}\vert m}\|\right) \right.\nonumber\\
& & + \,\,\, \left. \left\|\widehat{\Phi}^{(n)}_{\ell,K_{\sigma(i+1)}\vert m}\right\|_{\infty}\left(n^{-1/2}\|\widehat{\Psi}^{(n)}_{\ell,K_{\sigma(i+1)}\vert m}\|_{\infty}\|\widehat{W}^{(n)}\|_\mathcal{E} + \|\Delta_{\ell,K_{\sigma(i+1)}\vert m}\|\right)\right).                                      
\end{eqnarray}
Since $i\in F_\ell$, we have 
\[
 \|\Delta_{\ell,K_{\sigma(i)}\vert m}\|=\xi_{K_{\sigma(i)}}=\xi_{K_{\sigma(i+1)}}= \|\Delta_{\ell,K_{\sigma(i+1)}\vert m}\|
\]
and, therefore,
\begin{eqnarray*}
& &\|\widehat{\Psi}^{(n)}_{\ell,K_{\sigma(i)}\vert m}(\widehat{W}^{(n)}) + \sqrt{n}\Delta_{\ell,K_{\sigma(i)}\vert m}\|^{2}
- \|\widehat{\Psi}^{(n)}_{\ell,K_{\sigma(i+1)}\vert m}(\widehat{W}^{(n)}) 
+ \sqrt{n}\Delta_{\ell,K_{\sigma(i+1)}\vert m}\|^{2}\\
&=&\|\widehat{\Psi}^{(n)}_{\ell,K_{\sigma(i)}\vert m}(\widehat{W}^{(n)}) \|^{2}
- \|\widehat{\Psi}^{(n)}_{\ell,K_{\sigma(i+1)}\vert m}(\widehat{W}^{(n)}) 
\|^{2}+2\sqrt{n}<\widehat{\Psi}^{(n)}_{\ell,K_{\sigma(i)}\vert m}(\widehat{W}^{(n)}),\Delta_{\ell,K_{\sigma(i)}\vert m}>\\
&-&2\sqrt{n}<\widehat{\Psi}^{(n)}_{\ell,K_{\sigma(i+1)}\vert m}(\widehat{W}^{(n)}),\Delta_{\ell,K_{\sigma(i+1)}\vert m}>.
\end{eqnarray*}
Hence
\begin{eqnarray}\label{ci}
\vert\widehat{C}_{i}^{(n)}\vert
                        &\leq& n^{\alpha-1/2} \|\widehat{W}^{(n)}\|_\mathcal{E}\left(n^{-1/2}\left(\|\widehat{\Psi}^{(n)}_{\ell,K_{\sigma(i)}\vert m}\|_{\infty}^2 + \|\widehat{\Psi}^{(n)}_{\ell,K_{\sigma(i+1)}\vert m}\|_{\infty}^2\right)\|\widehat{W}^{(n)}\|_\mathcal{E}\right.\nonumber\\
& + & \left. 2\|\widehat{\Psi}^{(n)}_{\ell,K_{\sigma(i)}\vert m}\|_{\infty}\|\Delta_{\ell,K_{\sigma(i)}\vert m}\|+2\|\widehat{\Psi}^{(n)}_{\ell,K_{\sigma(i+1)}\vert m}\|_{\infty} \|\Delta_{\ell,K_{\sigma(i+1)},m}\|\right).
\end{eqnarray}  
From (\ref{wn}) and the central limit theorem, $ \widehat{W}^{(n)}$ converges in distribution, as $n\rightarrow +\infty$, to a random variable having a normal distribution in $\mathcal{E}$. Then, the almost sure  uniform convergences of $ \widehat{\Lambda}^{(n)}_{K_{\sigma(j)}\vert m}$,  $\widehat{\Phi}^{(n)}_{\ell,K_{\sigma(j)}\vert m}$ and $\widehat{\Psi}^{(n)}_{\ell,K_{\sigma(j)}\vert m}$  ($j\in I$), and the inequality $\alpha<1/2$ permit to deduce from (\ref{ai}),  (\ref{bi}),  (\ref{di}) and  (\ref{ci}) that $\widehat{A}_{i}^{(n)}$, $\widehat{B}_{i}^{(n)}$, $\widehat{C}_{i}^{(n)}$ and $\widehat{D}_{i}^{(n)}$ converge in probability to $0$, as $n\rightarrow +\infty$. Finally, the required result is obtained from (\ref{decompo}).


\begin{thebibliography}{}
%
%
\bibitem{asp00}
Aspakourov O, Krzanowski WJ (2000) Non-parametric smoothing of the location model in mixed variables discrimination. Statist. Comput. 10:289-297.


\bibitem{barhen95}
Bar-Hen A, Daudin JJ  (1995) Generalization of the Mahalanobis distance in the mixed case. J. Multivar. AnaL  53:332-342.

\bibitem{bed00}
Bedrick EJ, Lapidus J, Powell JF (2000) Estimating the Mahalanobis distance from mixed continuous and discrete data. Biometrics 56:394-401.

\bibitem{chang74}
Chang PC, Afifi AA (1979)  Classification based on dichotomous and continue variables. J. Amer. Stat. Assoc. 69:336-339.


\bibitem{daudin86}
Daudin JJ  (1986) Selection of variables in mixed-variable discriminant analysis. Biometrics   42:473-481.

\bibitem{daudin99}
Daudin JJ, Bar-Hen A   (1999) Selection  in discriminant analysis with continuous and discrete variables. Comput. Statist. Data Anal.   32:161-175.

\bibitem{deleon11}
De Leon AR, Carriere KC (2005)A generalized Mahalanobis distance for mixed data. J. Multivar. Anal. 92:174-185.

\bibitem{deleon11}
De Leon AR, Soo A, Williamson T (2011) Classification with discrete and continuous variables via general mised-data models. J. Appl. Statist. 38:1021-1032.

\bibitem{kru89a}
Krusinska E (1989) New procedure for selection of variables in location model for mixed variable discrimination. Biometrical J. 31:511-523.

\bibitem{kru89b}
Krusinska E (1989) Two step semi-optimal branch and bound algorithm for feature selection in mixed variable discrimination.  Pattern Recognition  22:455-459.

\bibitem{kru90}
Krusinska E (1990) Suitable location model selection in the terminology of graphical models. Biometrical J. 32:817-826.


\bibitem{krza75}
Krzanowski WJ   (1975) Discrimination and classification using both binary and continuous variables. J. Amer. Statist. Assoc.   70:782-790.

\bibitem{krza83}
Krzanowski WJ   (1983) Stepwise location model choice in mixed variable discrimination. J. R.  Statist. Soc.  C  32:260-266.

\bibitem{krza84}
Krzanowski WJ   (1984) On the null distribution of distance between two groups, using mixed continuous and categorical variables. J. Classification   1:243-253.




\bibitem{mahat07} 
Mahat NI, Krzanowski WJ, Hernandez A (2007) Variable selection in discriminant analysis based on the location model for mixed variables. Adv. Data Anal. Classif. 1:105-122.


\bibitem{mckay77}
McKay RJ   (1977) Simultaneous procedures for variable selection in multiple  discriminant analysis. Biometrika   64:283-290.

\bibitem{mclach92}
McLachlan GJ   (1992) Discriminant analysis and statistical pattern recognition. Wiley, New York.


\bibitem{nk12}
Nkiet GM (2012)
Direct variable selection for discrimination among several groups.
J.  Multivar.  Anal. 105:151-163.

\bibitem{olk61}
Olkin I, Tate RF (1961)
Multivariate correlation models with mixed discrete and continuous variables. Ann. Math. Stat. 32:448-465.
J.  Multivar.  Anal. 105:151-163.

\end{thebibliography}
\end{document}